\documentclass[preprint,hidelinks,onefignum,onetabnum]{siamart220329}

\usepackage{lipsum}
\usepackage{amsfonts}
\usepackage{graphicx}
\usepackage{epstopdf}
\usepackage{algorithmic}
\ifpdf
  \DeclareGraphicsExtensions{.eps,.pdf,.png,.jpg}
\else
  \DeclareGraphicsExtensions{.eps}
\fi

\headers{On a spherically lifted spin model}{X. Tang, Y.Khoo , L. Ying}

\title{On a spherically lifted spin model at finite temperature}

\author{
  Xun Tang\thanks{Department of Mathematics, Stanford University, Stanford, CA 94305, USA. 
  (\email{xuntang@stanford.edu})}
  \and
  Yuehaw Khoo\thanks{Department of Statistics, University of Chicago, Chicago, IL 60637, USA. 
  (\email{ykhoo@uchicago.edu})}
  \and
  Lexing Ying\thanks{Department of Mathematics and Institute for Computational and Mathematical Engineering (ICME), Stanford University, Stanford, CA 94305, USA. 
  (\email{lexing@stanford.edu})}
  \funding{X.T. and L.Y. are supported by AFOSR MURI award FA9550-24-1-0254.}
  }

\usepackage{amsopn}

\makeatletter
\newcommand*{\addFileDependency}[1]{%
  \typeout{(#1)}%
  \@addtofilelist{#1}%
  \IfFileExists{#1}{}{\typeout{No file #1.}}%
}
\makeatother

\usepackage{amsmath,amsfonts,bm}

\def\eqref#1{equation~\ref{#1}}

\def\1{\bm{1}}

\def\eps{{\epsilon}}

\DeclareMathAlphabet{\mathsfit}{\encodingdefault}{\sfdefault}{m}{sl}
\SetMathAlphabet{\mathsfit}{bold}{\encodingdefault}{\sfdefault}{bx}{n}

\newcommand{\R}{\mathbb{R}}

\DeclareMathOperator*{\argmax}{arg\,max}

\usepackage{amsmath,amssymb,a4wide}
\usepackage{latexsym}
\usepackage{indentfirst}
\usepackage{placeins}
\usepackage{enumerate}
\usepackage{booktabs}
\usepackage{algorithm}
\usepackage{algorithmic}
\usepackage{multirow}
\usepackage{optidef}
\usepackage{graphicx,color}
\usepackage{diagbox}
\usepackage[normalem]{ulem}
\usepackage{hyperref}
\usepackage{cleveref}

\newtheorem{thm}{Theorem}
\newtheorem{prop}{Proposition}[section]

\newcommand{\logdet}{\mathrm{logdet}}

\newcommand{\splus}{\mathcal{S}^{+}}
\newcommand{\sGW}{\mathcal{G}}
\newcommand{\fstargw}{q^{\star}}
\newcommand{\sstargw}{S^{\star}}

\newcommand{\gmax}{g_{\mathrm{max}}}

\DeclareMathOperator*{\maximize}{maximize}

\begin{document}

\maketitle
\begin{abstract}
    We investigate an \(n\)-vector model over \(k\) sites with generic pairwise interactions and spherical constraints. The model is a lifting of the Ising model whereby the support of the spin is lifted to a hypersphere. We show that the \(n\)-vector model converges to a limiting distribution at a rate of \(n^{-1/2 + o(1)}\). We show that the limiting distribution for \(n \to \infty\) is determined by the solution of an equality-constrained maximization task over positive definite matrices. We prove that the obtained maximal value and maximizer, respectively, give rise to the free energy and correlation function of the limiting distribution. In the finite temperature regime, the maximization task is a log-determinant regularization of the semidefinite program (SDP) in the Goemans-Williamson algorithm. Moreover, the inverse temperature determines the regularization strength, with the zero temperature limit converging to the SDP in Goemans-Williamson.
    Our derivation draws a curious connection between the semidefinite relaxation of integer programming and the spherical lifting of sampling on a hypercube. To the authors' best knowledge, this work is the first to solve the setting of fixed \(k\) and infinite \(n\) under unstructured pairwise interactions.
\end{abstract}
\section{Introduction}\label{sec: intro}

This work focuses on a ubiquitous class of vector-spin models over \(k\) sites. Each site \(i \in \{1, \ldots, k\}\) is associated with a spin \(x_i\) supported on the \((n-1)\)-dimensional sphere \(\mathbb{S}^{n-1} \subset \R^{n}\). The model we consider is the Boltzmann distribution under an inverse temperature \(\beta > 0\) and a symmetric matrix \(A \in \R^{k \times k}\). The unnormalized distribution function of the model is
\begin{equation}\label{eqn: spherical model}
    p(x_{1}, \ldots, x_{k}) = \exp{\left(\beta n\sum_{i,j = 1}^{k}\left<x_{i}, x_{j}\right>A_{ij}\right)}\prod_{i=1}^{k}\delta(1 - \lVert x_{i}\rVert^2 ),
\end{equation}
where \(A\) encodes the pairwise interaction. One can see from \Cref{eqn: spherical model} that \(p\) is \(O(n)\)-invariant, and \(p\) is commonly referred to as the \(n\)-vector model \cite{stanley1968spherical, makeenko2002methods}. For example, the cases where \(n = 1\) and \(n = 2\) are commonly referred to as the Ising model and the Potts model. The \(n\)-vector model can be seen as a lifting of the Ising model corresponding to \(n = 1\).

Similar vector-spin models have been studied extensively. The work in \cite{stanley1968spherical} shows that the \(n \to \infty\) limit is solvable when \(A\) represents an isotropic lattice model. Subsequently, \cite{stanley1969exact} solves the isotropic 1D spin-chain model with arbitrary \(n\). A more general case is considered in \cite{pastur1982disordered} where \(A\) represents a disordered lattice model with finite range interaction and the model is studied under the \(n \to \infty\) limit.

This work focuses on the \(n > k\) case, which is an ``over-parameterized" regime where the spin dimensionality is greater than the number of sites. This work shows that the \(n \to \infty\) limit is exactly solvable for a general interaction \(A\). To the best knowledge of the authors, this work is the first theoretical treatment for general \(A\) at the \(n \to \infty\) limit.

A key element of this work is to show that the model in \Cref{eqn: spherical model} is intricately connected to the semidefinite relaxation for the associated max-cut problem belonging to integer programming. The particular optimization task of interest is formulated as follows:
\begin{equation}\label{eqn: Goemans-Williamson with logdet}
    \begin{aligned}
       &\maximize_{S \in \R^{k \times k}} 
       \quad\beta\mathrm{tr}(AS) +  \frac{1}{2}\logdet(S)\\
       &\text{subject to} \quad S \succeq 0, \,\,S_{ii} = 1, \,\, i = 1, \ldots,k.
    \end{aligned}
\end{equation}
One can see that \Cref{eqn: Goemans-Williamson with logdet} is the semidefinite program (SDP) in the Goemans-Williamson algorithm \cite{goemans1995improved} with a log-determinant regularization term, and the \(\beta \to \infty\) limit corresponds to the unregularized case. Numerically solving \Cref{eqn: Goemans-Williamson with logdet} is efficient by using the conventional primal-dual method \cite{vandenberghe1996semidefinite}. We show that the maximal value and maximizer to \Cref{eqn: Goemans-Williamson with logdet} exactly correspond to the free energy and correlation function of the \(n\)-vector model in \Cref{eqn: spherical model} under the \(n \to \infty\) limit. The result exhibits a clear connection between the \(n\)-vector model and convex optimization over positive definite matrices.

We give one motivation for studying the \(n\)-vector model through its link to the study of over-parameterization and convex relaxation in optimization. There is a widely acknowledged folklore that over-parameterization improves the optimization landscape for non-convex optimization problems. For example, the effect of over-parameterization has been extensively explored recently for training tasks in deep learning \cite{ge2016matrix, du2018gradient, du2018power, li2018learning, allen2019convergence, arora2019fine, zou2019improved, liu2022loss}, and similar effects are well-known in convex relaxation \cite{candes2010power,boumal2016nonconvex}. Similarly, the SDP in the Goemans-Williamson algorithm is derived from the associated max-cut problem by lifting the decision space from the binary spin values \((s_1, \ldots, s_{k}) \in  \{-1, 1\}^{k}\) to the spherical spin values \((x_1, \ldots, x_{k}) \in  \left(\mathbb{S}^{n-1}\right)^{k}\) for \(n > k\). The lifting relaxes the NP-complete max-cut problem to an efficient equality-constrained SDP task. 

From this perspective, this work is a novel study on the effect of over-parameterization in the setting of sampling from probability distributions. The main statements of this work show that over-parameterization substantially simplifies the sampling task when the probability density is the Boltzmann distribution from max-cut problems. In particular, \Cref{thm: characterization of spherical model} in \Cref{sec: main contribution} shows that the approximate sampling of \(p\) only requires one to solve the regularized Goemans-Williamson SDP in \Cref{eqn: Goemans-Williamson with logdet} and to generate random rotation matrices from the Haar measure of the rotation group \(O(n)\). Moreover, while free energy approximation is exponentially difficult for general Ising models, \Cref{thm: characterization of free energy} in \Cref{sec: main contribution} shows that one can approximate the free energy of the lifted \(n\)-vector model through solving \Cref{eqn: Goemans-Williamson with logdet}. 

\subsection{Main contributions}\label{sec: main contribution}
We summarize the main contributions of this work. We first go through the main notations. We write \((X_i)_{i=1}^{k} \sim p\) to denote that \((X_i)_{i=1}^{k}\) is distributed according to \(p\), where \(X_i\) is the \(n\)-vector spin at site \(i\). We write \(X = \left[X_1, \ldots, X_k\right]\) to denote the \(\R^{n \times k}\) random matrix obtained from a column-wise concatenation of the \(n\)-vector spin at each site. We also write \(X \sim p\) to mean that the columns of \(X\) are distributed according to \(p\). For a generic matrix \(M\), one can perform the QR factorization by the Gram-Schmidt algorithm to get \(M = QR\), where \(Q \in \R^{n \times k}\) has orthonormal columns and \(R \in \R^{k \times k}\) is upper-triangular. For invertible \(M\), we use \(\Phi(M) = (Q, R)\) to denote the unique output of the Gram-Schmidt algorithm so that the diagonal entries of \(R\) are positive. We use \(\lVert \cdot \rVert_{F}\) to denote the Frobenius norm.

\paragraph{Convergence of the \(n\)-vector model}
Our first main statement concerns the distribution of the \(n\)-vector model at \(n > k\). The following statement in \Cref{thm: characterization of spherical model} shows that the distribution of \(X\) can be characterized by a nice product measure under the Gram-Schmidt factorization \(\Phi(X) = (Q, R)\). We remark that \(X\) is generically invertible for \(X \sim p\), and so \(\Phi(X)\) is well-defined almost surely. In particular, the statement shows that the Gram-Schmidt factorization of \(X=QR\) has a uniformly distributed \(Q\) and an approximately deterministic \(R\). As a consequence of \Cref{thm: characterization of spherical model}, statistical moments of \(p\) can be approximated by considering the distribution of \(X = QR\) where \(Q\) is uniformly sampled and \(R\) is fixed at \(R = R^{\star}\).
\begin{thm}\label{thm: characterization of spherical model}
    Let \(X \sim p\) for \(p\) in \Cref{eqn: spherical model}, and let \((Q, R) = \Phi(X)\) be the unique output of Gram-Schmidt factorization with \(X = QR\). The following statements are true:
    \begin{enumerate}[(i)]
        \item The matrices \(Q\) and \(R\) are statistically independent. \label{thm: characterization of spherical model pt 1}
        \item The law of \(Q\) follows from the uniform distribution on \(O(n, k)\), i.e. \(Q\) follows the law of the first \(k\) columns of a matrix drawn from the Haar measure of the orthogonal group \(O(n)\). \label{thm: characterization of spherical model pt 2}
        \item For \(n \to \infty\), the law of \(R\) concentrates to the delta measure on an upper-triangular matrix \(R^{\star} \in \R^{k \times k}\). Moreover, for any \(b \in (0, 1/2)\), one has
        \begin{equation}\label{eqn: main for R}
            \lim_{n \to \infty} \mathbb{P}\left[ \lVert R - R^{\star} \rVert_{F} < n^{-1/2 + b}\right] = 1,
        \end{equation}
        which shows that \(R\) converges to \(R^{\star}\) at an \(O(n^{-1/2 + o(1)})\) rate. \label{thm: characterization of spherical model pt 3}
        \item Let \(S^{\star}_{\beta}\) be the optimal solution to \Cref{eqn: Goemans-Williamson with logdet}, where the \(\beta\) term coincides with the inverse temperature in \Cref{eqn: spherical model}. The \(R^{\star}\) matrix is the unique right Cholesky factor of \(S^{\star}_{\beta}\) with \(S^{\star}_{\beta} = \left(R^{\star}\right)^{\top}R^{\star}\). \label{thm: characterization of spherical model pt 4}
    \end{enumerate}
\end{thm}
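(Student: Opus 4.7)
The plan is to transform the distribution into a log-concave Gibbs measure on the elliptope $\{S \in \R^{k\times k} : S \succeq 0,\, S_{ii}=1\}$ and then apply Laplace's method. First, the density $p(X)$ depends on $X = [x_1,\ldots,x_k] \in \R^{n\times k}$ only through the Gram matrix $X^\top X$, since $\langle x_i, x_j\rangle = (X^\top X)_{ij}$. Writing $X = QR$ via Gram-Schmidt gives the standard decomposition of the product-of-spheres measure
\[
dX = dQ \cdot \prod_{i=1}^k r_{ii}^{\,n-i}\, dR,
\]
where $dQ$ is the Haar measure on the Stiefel manifold $O(n,k)$, and the spherical constraints $\lVert x_i\rVert = 1$ become $S_{ii} = 1$ for $S = R^\top R$. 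Because the density depends only on $R$, parts \eqref{thm: characterization of spherical model pt 1} and \eqref{thm: characterization of spherical model pt 2} follow at once: $Q$ is independent of $R$, and $Q$ is uniform on $O(n,k)$.

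Next, push forward from $R$ to $S = R^\top R$. The Cholesky Jacobian $dS = 2^k\prod_{i=1}^k r_{ii}^{\,k-i+1}\, dR$ combined with the factor $\prod r_{ii}^{\,n-i}$ above yields the marginal density of $S$ on the elliptope,
\[
p(S) \propto \exp\!\left(n\beta\,\mathrm{tr}(AS) + \tfrac{n-k-1}{2}\logdet(S)\right).
\]
This is a Gibbs measure with energy $n\, g(S) + O(1)$, where $g(S) = \beta\,\mathrm{tr}(AS) + \tfrac12\logdet(S)$ is precisely the objective in \Cref{eqn: Goemans-Williamson with logdet}. Since $\logdet$ is strictly concave on $\splus$ and diverges to $-\infty$ on the boundary, and the elliptope is compact, $g$ admits a unique maximizer $S^\star_\beta$ lying in its relative interior. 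Its unique right Cholesky factor $R^\star$ then immediately gives part \eqref{thm: characterization of spherical model pt 4}.

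For part \eqref{thm: characterization of spherical model pt 3}, invoke Laplace's method. Restricting the Hessian of $g$ at $S^\star_\beta$ to the tangent subspace $\{\Delta \in \R^{k\times k}_{\mathrm{sym}} : \Delta_{ii} = 0\}$ yields a negative-definite quadratic form, hence a local bound $g(S^\star_\beta) - g(S) \geq c \lVert S - S^\star_\beta\rVert_F^2$ on a neighborhood of $S^\star_\beta$ inside the elliptope. Combined with a matching Gaussian-integral lower bound on the normalizer, one obtains for any $b \in (0, 1/2)$
\[
\mathbb{P}\!\left(\lVert S - S^\star_\beta\rVert_F > n^{-1/2 + b}\right) \leq \exp\!\left(-\Omega(n^{2b})\right).
\]
Since the Cholesky factorization is smooth at the interior point $S^\star_\beta$, a Lipschitz argument transfers this concentration from $S$ to $R$ with the same rate, establishing \Cref{eqn: main for R}.

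The main obstacle is sharpening the Laplace estimate enough to extract the $n^{-1/2 + o(1)}$ rate rather than merely convergence in probability. Two technical points need care: verifying strict negative-definiteness of $\hess{g}$ at $S^\star_\beta$ restricted to the diagonal-free subspace, and controlling the sub-leading $\tfrac{k+1}{2}\logdet(S)$ correction in the exponent so that it does not corrupt the leading-order Gaussian picture. The latter is addressed by a two-scale decomposition: restrict first to a compact slab $\{S \succeq \epsilon I,\, S_{ii}=1\}$ containing $S^\star_\beta$, where the correction is uniformly bounded and the quadratic approximation is sharp; then show that the mass outside this slab is exponentially small by invoking the global bound $g(S)\to -\infty$ near the singular boundary combined with the compactness of the elliptope.
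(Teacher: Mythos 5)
Your proposal follows essentially the same route as the paper: use the $O(n)$-invariant $QR$/Wishart-type Jacobian decomposition to get independence of $Q$ and $R$ with $Q$ Haar-uniform, push forward to $S=X^\top X$ to obtain the density $\propto\exp\left(n\beta\,\mathrm{tr}(AS)+\tfrac{n-k-1}{2}\logdet S\right)$ on the elliptope, apply Laplace's method at the unique interior maximizer $S^\star_\beta$, and transfer the concentration to $R$ via stability of the Cholesky factorization at a positive definite point (the paper cites Sun's perturbation bound where you invoke smoothness). The argument is correct and the technical points you flag (negative-definiteness of the restricted Hessian, the subleading $\logdet$ correction, and the exponentially small mass near the singular boundary) are exactly the ones the paper's Laplace-method proposition handles.
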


\paragraph{Free energy when \(n \to \infty\)}
Our second main statement shows that the free energy of the \(n\)-vector model at \(n \to \infty\) is exactly solvable by the optimization task in \Cref{eqn: Goemans-Williamson with logdet}.

\begin{thm}\label{thm: characterization of free energy}
    Let \(Z_{n}(\beta)\) be the partition function for \(p\) in \Cref{eqn: spherical model}, defined as follows
    \[
    Z_{n}(\beta) = \int_{x_1 \in \R^{n}}\, dx_1 \ldots \int_{x_k \in \R^{n}} \, dx_k\exp{\left(\beta n\sum_{i,j = 1}^{k}\left<x_{i}, x_{j}\right>A_{ij}\right)}\prod_{i=1}^{k}\delta(1 - \lVert x_{i}\rVert^2).
    \]
    We define the normalized free energy \(Q_n(\beta)\) to be \(Q_{n}(\beta) = \ln\left(
    \frac{Z_{n}(\beta)}{Z_{n}(0)}\right)\). Let \(q^{\star}_{\beta}\) be the maximal value to the optimization task in \Cref{eqn: Goemans-Williamson with logdet}. Then \(Q_n(\beta)\) can be approximated by \(q^{\star}_{\beta}\) by the following equation:
    \[
    Q_n(\beta) = nq^{\star}_{\beta} + O(1),
    \]
    where the \(O(1)\) term is independent of \(n\) and only depends on \(A, \beta\).
    
\end{thm}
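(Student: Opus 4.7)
The plan is to reduce \(Z_n(\beta)\) to an integral of \(\exp(nF_\beta(S))\) times a bounded density over the compact correlation-matrix set \(\mathcal{E}_k := \{S \in \R^{k\times k}: S \succeq 0,\ S_{ii}=1\}\), where \(F_\beta(S) := \beta\,\mathrm{tr}(AS) + \tfrac{1}{2}\logdet(S)\) is precisely the objective of \Cref{eqn: Goemans-Williamson with logdet}, and then apply Laplace's method. Via the QR factorization \(X = QR\) of \Cref{thm: characterization of spherical model}, the standard Stiefel Jacobian gives \(dX = \prod_{i=1}^k R_{ii}^{n-i}\, dQ\, dR\) with \(dQ\) the unnormalized uniform measure on \(O(n,k)\). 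A further Cholesky change of variables \(R \mapsto S = R^\top R\), whose Jacobian is \(dS = 2^k\prod_i R_{ii}^{k-i+1}\,dR\), combined with \(\det(S) = \prod_i R_{ii}^2\), yields \(\prod_i R_{ii}^{n-i}\,dR = 2^{-k}\det(S)^{(n-k-1)/2}\,dS\). The unit-norm constraints \(\|x_i\|^2 = 1\) translate into \(S \in \mathcal{E}_k\). Integrating out \(Q\), which contributes only the Stiefel volume, I obtain
\begin{equation*}
Z_n(\beta) = c_{n,k}\int_{\mathcal{E}_k}\exp\!\left(nF_\beta(S)\right)\det(S)^{-(k+1)/2}\, dS',
\end{equation*}
where \(dS'\) is Lebesgue measure on the \(d := k(k-1)/2\) free off-diagonal entries of \(S\) and \(c_{n,k}\) depends only on \(n,k\); this prefactor cancels in the ratio \(Z_n(\beta)/Z_n(0)\).

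Next, I apply Laplace's method. Because \(\logdet\) is strictly concave on the positive-definite cone and diverges to \(-\infty\) on its boundary, \(F_\beta\) attains its maximum at a unique interior point \(S^{\star}_\beta \succ 0\) with value \(q^{\star}_\beta\); by \Cref{thm: characterization of spherical model} this is the same \(S^{\star}_\beta\) arising from the QR characterization. The Hessian satisfies \(F_\beta''(S^{\star}_\beta)[H,H] = -\tfrac{1}{2}\|(S^{\star}_\beta)^{-1/2} H (S^{\star}_\beta)^{-1/2}\|_F^2\), which is strictly negative for every nonzero symmetric \(H\) and in particular on the tangent space \(\{H\ \text{symmetric}: H_{ii}=0\}\). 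The standard Laplace expansion therefore yields
\begin{equation*}
\log\!\int_{\mathcal{E}_k}\exp(nF_\beta(S))\det(S)^{-(k+1)/2}\,dS' = nq^{\star}_\beta - \tfrac{d}{2}\log n + C(\beta) + o(1),
\end{equation*}
where \(C(\beta)\) collects \(\det(S^{\star}_\beta)^{-(k+1)/2}\), the tangent-Hessian determinant, and a universal constant, and depends only on \(A\) and \(\beta\). Applying the identical expansion to the denominator with \(\beta=0\), whose maximizer is \(I_k\) and whose optimal value is \(q^{\star}_0 = 0\), then subtracting, the \(-\tfrac{d}{2}\log n\) terms and the \(c_{n,k}\) prefactor cancel, giving
\begin{equation*}
Q_n(\beta) = \log\frac{Z_n(\beta)}{Z_n(0)} = nq^{\star}_\beta + \left[C(\beta)-C(0)\right] + o(1) = nq^{\star}_\beta + O(1),
\end{equation*}
as claimed.

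The main technical obstacle is the rigorous justification of Laplace's method on the constrained compact domain \(\mathcal{E}_k\), whose boundary \(\{\det S = 0\}\) is non-empty. Two checks are needed: the Hessian non-degeneracy on the tangent space (handled by the explicit formula above), and the exponential suppression of mass away from \(S^{\star}_\beta\), including control of the algebraic singularity \(\det(S)^{-(k+1)/2}\) near \(\partial\mathcal{E}_k\). The latter follows from the dominating \(nF_\beta(S) \to -\infty\) behavior as \(\det S \to 0\): a standard decomposition into a shrinking neighborhood of \(S^{\star}_\beta\) and its complement shows that the complement is exponentially smaller than the bulk once \(n\) is large, absorbing the mild polynomial factor. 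One must also verify that the outer-rotation-degeneracy set \(\{X: \mathrm{rank}(X) < k\}\), where the QR factorization fails, has measure zero under \(p\); this is immediate since it is contained in a finite union of proper algebraic subvarieties.
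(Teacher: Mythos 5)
Your proposal is correct and follows essentially the same route as the paper: reduce $Z_n(\beta)$ to an integral of the Gram-matrix density over the off-diagonal entries of $S$, recognize the exponent as $n$ times the regularized Goemans--Williamson objective, apply Laplace's method at its unique interior maximizer, and cancel the $n$-independent prefactors and the $-\tfrac{d}{2}\log n$ terms against the $\beta=0$ normalization (where $q^\star_0=0$). The only cosmetic difference is that the paper regroups the integrand as $\exp((n-k-1)f(S))\,g(S)$ with a bounded, nonvanishing $g$ so its Laplace lemma applies verbatim, whereas you keep the singular prefactor $\det(S)^{-(k+1)/2}$ and argue separately (correctly) that it is absorbed near $\partial\mathcal{E}_k$ by the $n\to\infty$ decay of $\exp(nF_\beta)$.
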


\paragraph{Correlation function when \(n \to \infty\)}

The third main statement concerns the correlation function \(\xi\) of the \(n\)-vector model. We write \(S = X^{\top}X\) as a random matrix recording the site-wise correlations. For site \(i\) and site \(j\), the term \( \mathbb{E}\left[S_{ij}\right] = \mathbb{E}\left[\sum_{a = 1 }^{n}(X_i)_{a}(X_j)_{a}\right]\) is the correlation between site \(i\) and site \(j\), and \(\xi\) is defined by \(\xi(i,j) = \mathbb{E}\left[S_{ij}\right]\). The following statement shows that \(S\) concentrates on the maximizer of the optimization task in \Cref{eqn: Goemans-Williamson with logdet}. As a result of the statement, the correlation function \(\xi\) converges and is exactly solvable in the \(n\to \infty\) limit.

\begin{thm}\label{thm: characterization of correlation function}
    Let \(X \sim p\) for \(p\) in \Cref{eqn: spherical model}, and let \(S = X^{\top}X\).
    For \(n \to \infty\), the law of \(S\) concentrates to the delta measure on a positive definite matrix \(S_{\beta}^{\star} \in \R^{k \times k}\) coinciding with the unique maximizer to \Cref{eqn: Goemans-Williamson with logdet}. Moreover, for any \(b \in (0, 1/2)\), one has
    \begin{equation}\label{eqn: concentration with rate WTS}
        \lim_{n \to \infty} \mathbb{P}\left[ \lVert S - S^{\star}_{\beta} \rVert_{F} < n^{-1/2 + b}\right] = 1.
    \end{equation}
    Additionally, one has
    \begin{equation}\label{eqn: concentration with rate WTS 2}
       \lim_{\beta \to \infty} \mathrm{tr}(AS_{\beta}^{\star})= \max_{S \in \R^{k \times k},  S \succeq 0, S_{ii} = 1, \,\, i = 1, \ldots,k} 
       \quad \mathrm{tr}(AS),
    \end{equation}
    which shows that \(S_{\beta}^{\star}\) converges to a maximizer of the semidefinite relaxation of the weighted max-cut problem with edge weight \(A\).
\end{thm}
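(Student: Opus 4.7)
The plan is to reduce \Cref{eqn: concentration with rate WTS} to the already-established concentration of \(R\) in \Cref{thm: characterization of spherical model} via the algebraic identity \(S = R^{\top}R\), and to treat the zero-temperature statement \Cref{eqn: concentration with rate WTS 2} through a standard perturbation argument against an interior-point feasible matrix. The main (mild) obstacle will be ensuring that the Lipschitz transfer from \(R\) to \(S\) does not spoil the polynomial rate.

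For \Cref{eqn: concentration with rate WTS}, I would first invoke part (\ref{thm: characterization of spherical model pt 2}) of \Cref{thm: characterization of spherical model}: since \(Q \in O(n,k)\) has orthonormal columns, \(Q^{\top}Q = I_{k}\), and hence
\[
S = X^{\top}X = R^{\top}Q^{\top}Q R = R^{\top}R,
\]
so \(S\) is a deterministic function of \(R\) alone. Part (\ref{thm: characterization of spherical model pt 4}) identifies the natural limit as \(S^{\star}_{\beta} = (R^{\star})^{\top}R^{\star}\). To transport the rate in \Cref{eqn: main for R} from \(R\) to \(S\), I would use the bilinear bound
\[
\lVert R^{\top}R - (R^{\star})^{\top}R^{\star}\rVert_{F} \leq (\lVert R\rVert_{F} + \lVert R^{\star}\rVert_{F})\,\lVert R - R^{\star}\rVert_{F}.
\]
Given \(b \in (0,1/2)\), I would pick an intermediate exponent \(\tilde b \in (0,b)\) and apply part (\ref{thm: characterization of spherical model pt 3}) at exponent \(\tilde b\). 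On the event \(\{\lVert R - R^{\star}\rVert_{F} < n^{-1/2+\tilde b}\}\), whose probability tends to one, \(\lVert R\rVert_{F} \leq \lVert R^{\star}\rVert_{F} + 1\) eventually, so the right-hand side above is \(O(n^{-1/2+\tilde b}) < n^{-1/2+b}\) for all sufficiently large \(n\). This is the transfer alluded to above, and it yields \Cref{eqn: concentration with rate WTS}.

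For \Cref{eqn: concentration with rate WTS 2}, let \(\gC\) denote the feasible set in \Cref{eqn: Goemans-Williamson with logdet}. The upper bound \(\mathrm{tr}(AS^{\star}_{\beta}) \leq \max_{S \in \gC}\mathrm{tr}(AS)\) is immediate from feasibility of \(S^{\star}_{\beta}\). For the matching lower bound, I would fix any maximizer \(\bar{S}\) of \(\mathrm{tr}(AS)\) over \(\gC\) and introduce the interior perturbation \(S_{\epsilon} := (1-\epsilon)\bar{S} + \epsilon I \in \gC\) for small \(\epsilon > 0\), so that \(\logdet(S_{\epsilon})\) is finite. Hadamard's inequality gives \(\logdet(S) \leq 0\) on all of \(\gC\), so the optimality of \(S^{\star}_{\beta}\) against \(S_{\epsilon}\) in \Cref{eqn: Goemans-Williamson with logdet} rearranges to
\[
\mathrm{tr}(AS^{\star}_{\beta}) \geq \mathrm{tr}(AS_{\epsilon}) + \frac{1}{2\beta}\logdet(S_{\epsilon}) - \frac{1}{2\beta}\logdet(S^{\star}_{\beta}) \geq \mathrm{tr}(AS_{\epsilon}) + \frac{1}{2\beta}\logdet(S_{\epsilon}).
\]
Sending \(\beta \to \infty\) and then \(\epsilon \to 0\) closes the gap by continuity of \(\mathrm{tr}(A\,\cdot)\) together with \(S_{\epsilon} \to \bar{S}\), completing the proof.
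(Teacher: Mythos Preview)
Your argument for \Cref{eqn: concentration with rate WTS 2} is essentially the paper's: both compare \(S^{\star}_{\beta}\) against the interior point \((1-\epsilon)\bar S + \epsilon I\), both use \(\logdet(S^{\star}_{\beta}) \leq 0\) on \(\gC\) (the paper derives this from \(\mathrm{tr}(S^{\star}_{\beta}) = k\), you invoke Hadamard), and both let the regularization term vanish. The only difference is cosmetic: the paper picks \(\epsilon = 1/\beta\) to extract an explicit \(O(\beta^{-1}\ln\beta)\) rate, whereas you take an iterated limit \(\beta\to\infty\) then \(\epsilon\to 0\). Both are valid for the statement as written.

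For \Cref{eqn: concentration with rate WTS}, however, your reduction is circular relative to the paper's logical structure. You invoke parts (\ref{thm: characterization of spherical model pt 3})--(\ref{thm: characterization of spherical model pt 4}) of \Cref{thm: characterization of spherical model} as ``already established'', but in the paper those parts are \emph{derived from} \Cref{thm: characterization of correlation function}: the proof of \Cref{thm: characterization of spherical model} in \Cref{sec: proof of spherical model characterization} first establishes the concentration of \(S\) (i.e., \Cref{eqn: concentration with rate WTS}) via the Laplace method applied to the density \(p_L\) of the strictly lower-triangular part \(L\) of \(S\), and only then transfers concentration from \(S\) to \(R\) through a Cholesky perturbation bound. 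Your Lipschitz transfer \(R \mapsto R^{\top}R\) runs this implication in the opposite direction, so as written it assumes what is to be proved.

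The paper's actual route is to work with \(L\) (the free coordinates of \(S\) after imposing \(S_{ii}=1\)), whose density on a compact set \(\mathcal{L}\subset\R^{k(k-1)/2}\) is \(\exp((n-k-1)f(L))g(L)\) with \(f(L)=\beta\,\mathrm{tr}(AS(L))+\tfrac12\logdet S(L)\); the Laplace method (\Cref{prop: laplace method}) then gives concentration of \(L\) around \(L^{\star}_{\beta}\) at the desired \(n^{-1/2+b}\) rate, and the identity \(\lVert S - S^{\star}_{\beta}\rVert_F = 2\lVert L - L^{\star}_{\beta}\rVert_F\) finishes. If you want to salvage your direction of argument, you would need an independent proof of the concentration of \(R\) (e.g., by applying the Laplace method directly to the density \(p_U\) of the strictly upper-triangular entries of \(R\) derived in \Cref{sec: distribution formula of R}); absent that, the proposal has a gap.
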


\subsection{Outline}
This work is organized as follows. \Cref{sec: background} gives preliminary statements on rotation-invariant spherical spin distributions. \Cref{sec: infinite case} proves \Cref{thm: characterization of free energy} and \Cref{thm: characterization of correlation function}. \Cref{sec: finite case} discusses numerically sampling from the \(n\)-vector model and proves \Cref{thm: characterization of spherical model}. \Cref{sec: conclusion} gives concluding remarks and discusses future directions.

\section{Background on \texorpdfstring{\(O(n)\)}{O(n)}-invariant distributions}\label{sec: background}
This section gives the background on \(O(n)\)-invariant models. Throughout this section, we assume \(n > k\). We analyze the distribution of \(S = X^{\top}X\) for \(X \sim p\). One of the difficulties in the analysis on the \(n\)-vector model \(p\) is that \(p\) is supported on \(\left(\mathbb{S}^{n-1}\right)^{k}\), which is a singular measure in \(\left(\R^n\right)^{k}\). We show that the distribution of \(S = X^{\top}X\) has an analytic formula. We show in particular that the strictly lower-triangular part of \(S \in \R^{k \times k}\) has a probability density over an open subset in \(\R^{k(k-1)/2}\). We then give preliminary results on the probability distribution of \(Q, R\) for \((Q, R) = \Phi(X)\).

\paragraph{Distribution of \texorpdfstring{\(S = X^{\top}X\)}{S = X'X}}
We derive the analytic formula for the distribution of \(S = X^{\top}X\). We use \(\splus\) to denote the space of \(k \times k\) positive semidefinite matrices. One sees that \(\splus\) is the support of \(S\). We give a measure to \(\splus\). Define \(\iota \colon \R^{k \times k} \to \R^{k(k+1)/2}\) as the invertible map from a \(k \times k\) symmetric matrix to its upper-triangular entries, i.e.
\begin{equation*}
    \iota\left( \left[M_{ij}\right]_{i,j=1}^{k} \right) = \left( M_{ij}\right)_{1 \leq i \leq j \leq k}.
\end{equation*}
Note that \(\iota(\splus)\) is a closed subset of \(\R^{k(k+1)/2}\) with a nonempty interior. Let \(\mu_{\R^{k(k+1)/2}}\) be the Lebesgue measure on \(\R^{k(k+1)/2}\) and let \(\mu_{\iota(\splus)}\) be the subspace measure of \(\mu_{\R^{k(k+1)/2}}\) restricted to \(\iota(\splus)\). The measure we give to \(\splus\) is \(\mu_{S} = (\iota^{-1})_{\#}\left(\mu_{\iota(\splus)}\right)\), i.e. \(\mu_{S}\) is the pushforward of the Lebesgue measure on \(\splus\) by \(\iota^{-1}\). 

The goal of this subsection is to prove the following statement, which characterizes the distribution function of \(S\).
\begin{thm}\label{theorem: main for S}
Let \(p\) be the \(n\)-vector model in \Cref{eqn: spherical model}. For an \(n \times k\) matrix \(X = \left[x_1, \ldots, x_k\right]\), write \(p(X) = p(x_1, \ldots, x_k)\). For a continuous function \(f \colon \R^{k \times k} \to \R\), one has
\begin{equation}\label{eqn: change of variable for S}
\int_{X \in \R^{n \times k}}dX\, p(X) f(X^{\top}X) = \int_{S \in \splus} \mu_{\mathcal{S}}(dS) p_{S}(S) f(S),
\end{equation}
where for the constant
\footnote{
The \(\omega(n,k)\) factor is calculated by
\[
c(n,p) = \pi^{(p^2-p)/4}2^{np/2}\prod_{j=1}^{p}\Gamma\left(\frac{n-j+1}{2}\right), \quad \omega(n,p) = 1/c(n,p).
\]
}
\(C_{n,k} = \left(\sqrt{2\pi}\right)^{nk}\omega(n, k)\), one has
\begin{equation}\label{eqn: measure for S}
    p_{S}(S) = 2^{k}C_{n,k}\exp\left(\beta n\mathrm{tr}(SA)\right)
    \det(S)^{(n-k-1)/2}\prod_{i= 1}^{k}\delta(1 - S_{ii} ).
\end{equation}

Let \(X \sim p\) and let \(L\) be the strictly lower-triangular part of \(X^{\top}X\). Write \(S(L) = I_{k} + L + L^{\top}\). The support of \(L\) is \(\{L \in \R^{k(k-1)/2} \mid S(L) \in \splus\}\). As a consequence of \Cref{eqn: change of variable for S} and \Cref{eqn: measure for S}, the distribution function of \(L\) has an (unnormalized) density function with respect to the Lebesgue measure on its support and can be written as follows:
\[
p_{L}(L) = 2^{k}C_{n,k}\exp\left(\beta n\mathrm{tr}(S(L)A)\right)\det(S(L))^{(n-k-1)/2}.
\]
\end{thm}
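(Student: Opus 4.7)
The plan is to factor $X \in \R^{n \times k}$ through its Gram--Schmidt decomposition $X = QR$ with $Q \in O(n,k)$ and $R$ upper-triangular with positive diagonal, and then pass to $S = R^{\top}R$. The crucial observation is that $p(X)f(X^{\top}X)$ depends on $X$ only through $S$: one has $\sum_{i,j}\langle x_i,x_j\rangle A_{ij} = \mathrm{tr}(SA)$ and $\lVert x_i\rVert^2 = S_{ii}$, so both the exponential and the spherical delta factors in \Cref{eqn: spherical model} are functions of $S$ alone. This decouples the $Q$-integral from the $S$-integral; the former contributes the Stiefel volume $\mathrm{Vol}(O(n,k))$, and the latter carries a nontrivial Jacobian that I must compute.

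\paragraph{Two-step Jacobian.}
The computational core of the plan is a classical two-step Jacobian (cf.\ Muirhead, Chapter~2). First, on the full-measure subset of full-column-rank matrices, the Gram--Schmidt map $X \mapsto (Q, R)$ is a diffeomorphism with
\[
dX \;=\; \Bigl(\prod_{j=1}^{k} R_{jj}^{\,n-j}\Bigr)\, dR \; d\mu_{O(n,k)}(Q),
\]
where $d\mu_{O(n,k)}$ is the invariant measure on the Stiefel manifold and $dR = \prod_{i \le j} dR_{ij}$ is Lebesgue on the upper-triangular coordinates. Second, the Cholesky-type map $R \mapsto S = R^{\top}R$ on upper-triangular $R$ with positive diagonal has Jacobian
\[
dS \;=\; 2^{k}\Bigl(\prod_{j=1}^{k} R_{jj}^{\,k-j+1}\Bigr) dR,
\]
which I would check by a direct determinant computation in low dimensions and then by induction. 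Eliminating $dR$ and using $\det(S) = \prod_j R_{jj}^{2}$ combines these into
\[
dX \;=\; 2^{-k}\det(S)^{(n-k-1)/2}\, dS\, d\mu_{O(n,k)}(Q).
\]

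\paragraph{Assembly and reduction to $L$.}
With this change of variables in hand I substitute into the left-hand side of \Cref{eqn: change of variable for S}, pull the $Q$-independent integrand outside the $Q$-integral, and evaluate $\int d\mu_{O(n,k)}(Q) = \mathrm{Vol}(O(n,k)) = 2^k \pi^{nk/2}/\Gamma_k(n/2)$. Matching the resulting combination against the footnote definition $C_{n,k} = \pi^{nk/2}/\Gamma_k(n/2)$, together with the convention by which each $\delta(1-\lVert x_i\rVert^2)$ is interpreted against its scalar $S$-space counterpart $\delta(1-S_{ii})$, produces the claimed prefactor $2^{k} C_{n,k}$ in \Cref{eqn: measure for S}; the factors $\exp(\beta n\, \mathrm{tr}(SA))$ and $\prod_i \delta(1-S_{ii})$ then pass through unchanged since they depend only on $S$, giving \Cref{eqn: change of variable for S}. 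For the corollary on $L$, each $\delta(1-S_{ii})$ is linear in its diagonal argument, so evaluating against $\mu_S = \prod_{i\le j}dS_{ij}$ trivially integrates the diagonal coordinates out, fixes $S = I_k + L + L^{\top}$, and leaves Lebesgue measure $\prod_{i<j}dS_{ij} = dL$ on the open support $\{L \in \R^{k(k-1)/2} : I_k + L + L^{\top} \succ 0\}$; substituting reads off $p_L(L)$.

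\paragraph{Main obstacle.}
The main technical obstacle is constant bookkeeping: the Jacobian identities above are classical and the substitution is mechanical, but matching the combination $2^{-k}\mathrm{Vol}(O(n,k))$ with the convention under which each spherical $\delta(1-\lVert x_i\rVert^2)$ is interpreted versus its scalar counterpart $\delta(1-S_{ii})$, so as to recover exactly the prefactor $2^{k} C_{n,k}$, requires careful attention. Once the constants are tracked the remaining argument is substitution.
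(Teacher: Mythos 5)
Your proposal is correct in substance and arrives at the same computational core as the paper --- the Wishart-type Jacobian $\det(S)^{(n-k-1)/2}$ and the constant $2^k C_{n,k}$ --- but by a different route. The paper does not compute the QR and Cholesky Jacobians itself: it cites the resulting density formula for $S=X^{\top}X$ wholesale (\Cref{prop: density on homogeneous spaces}, quoted from Eaton), and spends its effort instead on the singular part of the measure, constructing an auxiliary absolutely continuous $O(n)$-invariant distribution $\nu$ and realizing each $\delta(1-\lVert x_i\rVert^2)$ as a limit of thin-shell indicators $\tfrac{1}{t}\chi(\lVert x_i\rVert\in[1,1+t])$ via \Cref{prop: sphere dirac delta}. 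You invert this emphasis: you re-derive the cited proposition from the two classical Muirhead Jacobians (both of your formulas are correct, and they do combine to $dX = 2^{-k}\det(S)^{(n-k-1)/2}\,dS\,d\mu_{O(n,k)}(Q)$ with $\int d\mu_{O(n,k)} = 2^k C_{n,k}$), but you pass the delta functions through the change of variables formally. That formal step is exactly what the paper's mollification argument is designed to justify, and it is also where your flagged ``convention'' issue is resolved: under the literal scalar reading $\delta(1-\lVert x_i\rVert^2)=\delta(1-S_{ii})$ your constants assemble to $C_{n,k}$, not $2^kC_{n,k}$; the extra $2^k$ appears only if, as the paper does, one interprets each factor as surface measure on the sphere (i.e.\ as $\delta(1-\lVert x_i\rVert)$), whereupon the shell $\lVert x_i\rVert\in[1,1+t]$ maps to $S_{ii}\in[1,1+2t+t^2]$ and each coordinate contributes a factor of $2$ in the limit. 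So your plan is sound and more self-contained on the Jacobian side, but to be complete you must commit to the surface-measure interpretation of the constraint and supply a limiting argument (your own or the paper's \Cref{prop: sphere dirac delta}) in place of the formal delta manipulation; your reduction to the density of $L$ then goes through as stated.
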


Therefore, \Cref{theorem: main for S} shows that \(S = X^{\top}X\) has a nice distribution function. Moreover, the distribution of the strictly lower-triangular part of \(S\) has a probability density. In particular, the \(\exp\left(\beta n\mathrm{tr}(S(L)A)\right)\) factor of \(p_{L}\) suggests that the spin dimension \(n\) has a similar effect as the inverse temperature \(\beta\). The temperature-like effect of \(n\) is key to deriving the main statements stated in \Cref{sec: intro}. 

We give two intermediate results necessary for proving \Cref{theorem: main for S}. The first intermediate result is a statement regarding \(O(n)\)-invariant distributions which are absolutely continuous with respect to the Lebesgue measure. The result is proven in \cite{eaton1983multivariate} and we quote it here.
\begin{prop}\label{prop: density on homogeneous spaces}
(Proposition 7.6 of \cite{eaton1983multivariate})
Let \(X\in \R^{n \times k}\) be a random matrix with distribution \(X \sim \nu \). Suppose that \(\nu\) is \(O(n)\)-invariant. 
That is, for any Borel subset \(B \subset \R^{n \times k}\), and any \(\Gamma \in O(n)\), we assume that one has
\[
\nu(B) = \nu(\Gamma B).
\]

Suppose the law of \(X\) has a density function \(g\) with respect to the Lebesgue measure on \(\R^{n \times k}\) and that there exists \(h\) so that \(g(X) = h(X^{\top}X)\). Then \(S = X^{\top}X\) has the following density \(g_S\) with respect to \(\mu_{\mathcal{S}}\):
\[
g_S(S) 
= C_{n,k}\det(S)^{(n-k-1)/2}h(S).
\]
In particular, for a continuous function \(f \colon \R^{k \times k} \to \R\), one has
\begin{equation*}
\int_{X \in \R^{n \times k}}\nu(dX)\, f(X^{\top}X) = \int_{S \in \splus} \mu_{\mathcal{S}}(dS) g_{S}(S) f(S).
\end{equation*}
\end{prop}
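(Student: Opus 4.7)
The plan is to establish this proposition via the QR decomposition of $X$. For $X \in \R^{n \times k}$ of full column rank (which holds almost surely since $n > k$ and $\nu$ is absolutely continuous with respect to Lebesgue measure), write $X = QR$ with $Q \in V_k(\R^n)$ on the Stiefel manifold of matrices with orthonormal columns and $R$ upper triangular with strictly positive diagonal. This gives a measurable bijection onto $V_k(\R^n) \times T^{+}$, where $T^{+}$ denotes the open cone of such $R$'s. The decomposition cleanly separates the ``$O(n)$-direction'' $Q$, on which the density is constant by assumption, from the shape $R$, which determines $S = R^{\top}R$ and hence $h(X^{\top}X) = h(R^{\top}R)$.

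First, I would invoke the classical Jacobian identity for the QR map,
\begin{equation*}
dX \;=\; \Bigl(\prod_{i=1}^{k} R_{ii}^{n-i}\Bigr)\, \mu_{V_k}(dQ)\, dR,
\end{equation*}
where $\mu_{V_k}$ is the $O(n)$-invariant volume on $V_k(\R^n)$ and $dR$ is Lebesgue measure on $T^{+}$. This identity is obtained by differentiating the Gram-Schmidt parametrization column by column, producing a block triangular Jacobian whose diagonal blocks contribute the powers $R_{ii}^{n-i}$. Since $g(X) = h(R^{\top}R)$ does not involve $Q$, integration over the Stiefel factor collapses to a multiplicative constant equal to $\mathrm{vol}(V_k(\R^n))$, reducing the problem to an integral over $T^{+}$.

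Next, I would change variables from $R \in T^{+}$ to $S = R^{\top}R$ in the interior of $\splus$. A direct differentiation shows the Jacobian is triangular, with diagonal entries $2R_{ii}$ coming from $S_{ii}$ and $R_{ii}$ coming from off-diagonal $S_{ij}$ with $i<j$, so $dS = 2^{k}\prod_{i=1}^{k} R_{ii}^{k-i+1}\, dR$. Combining with the QR Jacobian gives
\begin{equation*}
\prod_{i=1}^{k} R_{ii}^{n-i}\, dR \;=\; \frac{1}{2^{k}}\prod_{i=1}^{k} R_{ii}^{n-k-1}\, dS \;=\; \frac{1}{2^{k}}\det(S)^{(n-k-1)/2}\, dS,
\end{equation*}
where the last equality uses $\det(S) = \prod_i R_{ii}^{2}$. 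Substituting into the original integral and using $h(R^{\top}R) = h(S)$ produces the claimed density $g_S(S) = C_{n,k}\det(S)^{(n-k-1)/2} h(S)$ with $C_{n,k} = 2^{-k}\,\mathrm{vol}(V_k(\R^n))$. Matching to the footnote formula is then the standard evaluation $\mathrm{vol}(V_k(\R^n)) = 2^{k}\pi^{nk/2}/\bigl(\pi^{k(k-1)/4}\prod_{j=1}^{k}\Gamma((n-j+1)/2)\bigr)$.

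The main obstacle is the careful derivation of the QR Jacobian, whose $R_{ii}^{n-i}$ weights require a clean parametrization of the Stiefel tangent space; the Cholesky Jacobian and the final constant matching are then routine. A minor technical point is that $\{X : \mathrm{rank}(X) < k\}$ is a Lebesgue null set, so the QR bijection holds on a full-measure subset and the pushforward identity extends without modification.
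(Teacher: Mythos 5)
The paper does not prove this proposition at all --- it is quoted verbatim as Proposition 7.6 of Eaton's \emph{Multivariate Statistics} and used as a black box. Your derivation is the standard (and correct) route to that result: the QR Jacobian $dX = \prod_i R_{ii}^{n-i}\,\mu_{V_k}(dQ)\,dR$, integration over the Stiefel factor (legitimate because $g(X)=h(R^{\top}R)$ is independent of $Q$), and the Cholesky change of variables $dS = 2^{k}\prod_i R_{ii}^{k-i+1}\,dR$, giving the exponent $n-i-(k-i+1)=n-k-1$ and hence $\det(S)^{(n-k-1)/2}$. I checked the constant: $2^{-k}\,\mathrm{vol}(V_k(\R^n)) = \pi^{nk/2}\big/\bigl(\pi^{k(k-1)/4}\prod_{j=1}^{k}\Gamma((n-j+1)/2)\bigr)$, which equals the paper's $C_{n,k}=(\sqrt{2\pi})^{nk}\omega(n,k)$ after cancelling the $2^{nk/2}$ factors, so your normalization is consistent with the footnote. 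Two minor remarks: the full-rank caveat requires $n\geq k$ (assumed throughout the relevant section of the paper, so fine); and the separate $O(n)$-invariance hypothesis is actually redundant in your argument, since $g(X)=h(X^{\top}X)$ already forces invariance of the density --- you correctly only ever use the latter.
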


The second intermediate result is a statement that formulates the uniform measure on \(\mathbb{S}^{n-1}\) as the limit of characteristic functions. We state it and we prove it after proving \Cref{theorem: main for S}.
\begin{prop}\label{prop: sphere dirac delta}
Let \(B(x,r) \subset \R^{n}\) denote the ball of radius \(r\) centered at \(x\). Suppose \(q\colon B(0,2) \to \R\) is continuous. Then
\begin{equation}\label{eqn: sphere dirac delta WTS}
    \int_{x \in \R^{n}} dx q(x)\delta(1 - \lVert x \rVert_{2})  = \lim_{t \to 0}\frac{1}{t}\int_{x \in \R^{n}}dxq(x) \chi\left(\lVert x \rVert \in [1, 1+t]\right),
\end{equation}
where \(\chi\) is the characteristic function, i.e. 

\[\chi\left(\lVert x \rVert \in [1, 1+t]\right) = 
\begin{cases}
1 & \text{if \(\lVert x \rVert \in [1, 1+t]\)},\\
0 &\text{otherwise}.
\end{cases}
\]

Moreover, for any \(c > 0\), \(\varepsilon > 0\) and any continuous univariate function \(b \colon (1-\varepsilon, 1+\varepsilon) \to \R\), one has
\begin{equation}\label{eqn: sphere dirac delta WTS 2}
    \int_{1-\varepsilon}^{1+\varepsilon} dx \,b(x)\delta(1 - x)  = \lim_{t \to 0}
    \frac{1}{t}\int_{1 - \min(t, \varepsilon)}^{1 + \min(t, \varepsilon)}dx\,b(x) \chi\left(x \in [1, 1+t + ct^2]\right).
\end{equation}
\end{prop}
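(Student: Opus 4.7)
The plan is to reduce both identities to the elementary fact that, for a function \(\phi\) continuous at \(r=1\), one has \(\lim_{t\to 0^{+}}\frac{1}{t}\int_{1}^{1+t}\phi(r)\,dr = \phi(1)\). In both cases the left-hand side is, by the sifting property of the one-dimensional \(\delta\), the evaluation of a suitable continuous quantity at \(r=1\), while the right-hand side is the corresponding thin-shell average.

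For \Cref{eqn: sphere dirac delta WTS}, I would pass to spherical coordinates \(x = r\sigma\) with \(r \geq 0\) and \(\sigma \in \mathbb{S}^{n-1}\), under which \(dx = r^{n-1}\,dr\,d\sigma\). Setting
\[
    F(r) := r^{n-1}\int_{\mathbb{S}^{n-1}}q(r\sigma)\,d\sigma,
\]
the right-hand side of \Cref{eqn: sphere dirac delta WTS} becomes \(\frac{1}{t}\int_{1}^{1+t}F(r)\,dr\). Continuity, together with local uniform continuity, of \(q\) on a compact neighborhood of the unit sphere contained in \(B(0,2)\) ensures that \(F\) is continuous at \(r=1\), so the average converges to \(F(1) = \int_{\mathbb{S}^{n-1}}q(\sigma)\,d\sigma\) as \(t \to 0^{+}\). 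This latter quantity is the standard definition of the left-hand side as the integral of \(q\) against the surface measure of the unit sphere, giving the identity.

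For \Cref{eqn: sphere dirac delta WTS 2}, the left-hand side equals \(b(1)\) by the sifting property. For the right-hand side I would restrict attention to \(t\) small enough that \(t < \varepsilon\); then \(\min(t,\varepsilon) = t\) and the integration window becomes \([1-t,1+t]\). Because \(c > 0\) forces \(1 + t + ct^2 > 1 + t\), the indicator \(\chi(x \in [1, 1+t+ct^2])\) equals \(1\) on \([1, 1+t]\) and \(0\) on \([1-t, 1)\) within that window. The expression therefore reduces to \(\frac{1}{t}\int_{1}^{1+t} b(x)\,dx\), which tends to \(b(1)\) by continuity of \(b\) at \(1\).

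No step poses a serious obstacle. The only point that warrants attention is verifying that the quadratic excess \(ct^2\) in the upper endpoint of the characteristic function does not contribute in the limit; this is automatic because the outer integration limit \(1+\min(t,\varepsilon)\) clips the effective upper endpoint back to \(1+t\) for all sufficiently small \(t\). Everything else is a direct consequence of continuity of the integrands and the one-dimensional averaging fact recalled above.
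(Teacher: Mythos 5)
Your proof is correct, and for the second identity it is genuinely simpler than the paper's. For \Cref{eqn: sphere dirac delta WTS}, the paper simply invokes the coarea formula (citing Evans and Gariepy), whereas you prove the same fact directly by passing to polar coordinates and reducing to the one-dimensional averaging statement for the continuous radial profile \(F(r)=r^{n-1}\int_{\mathbb{S}^{n-1}}q(r\sigma)\,d\sigma\); the two arguments rest on the same change of variables, yours just spells it out. For \Cref{eqn: sphere dirac delta WTS 2}, the paper first reduces to non-negative \(b\) and then runs a two-sided squeeze: the lower bound drops the \(ct^2\) excess, and the upper bound enlarges the indicator's support to \([1,1+(1+\eta)t]\) for \(t<\eta/c\), yielding \(b(1)\le\lim\le(1+\eta)b(1)\) before sending \(\eta\to 0\). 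You instead observe that for \(t<\varepsilon\) the outer integration window is \([1-t,1+t]\), so its intersection with the indicator's support \([1,1+t+ct^2]\) is exactly \([1,1+t]\) regardless of \(c\); the expression collapses to \(\frac{1}{t}\int_{1}^{1+t}b(x)\,dx\), and the limit \(b(1)\) is immediate, with no squeeze, no sign reduction, and no auxiliary \(\eta\). This clipping observation is sharper than what the paper uses (the paper's upper bound silently ignores the clipping, which is harmless but leaves the non-tight constant \(1+\eta\)), and it makes transparent why the quadratic perturbation of the endpoint cannot contribute in the limit.
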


We first prove \Cref{theorem: main for S} using the two intermediate results.
\begin{proof}
(Proof of \Cref{theorem: main for S})

Throughout this proof, we assume that \(X = \left[x_1, \ldots, x_k\right]\).
As \(f\) is continuous, one can use \Cref{prop: sphere dirac delta} and write the left hand side of \Cref{eqn: change of variable for S} as follows
\begin{align*}
&\int_{X \in \R^{n \times k}}dX\, p(X) f(X^{\top}X)
\\
=&\lim_{t_1, \ldots, t_{k} \to 0}\int_{X \in \R^{n \times k}} dX \exp(\beta n \mathrm{tr}(AX^{\top}X))f(X^{\top}X)\prod_{i=1}^{k}
\frac{1}{t_i}\chi{\left(\lVert x_i \rVert \in [1, 1 + t_i]\right)}.\\
\end{align*}

Then, to use \Cref{prop: density on homogeneous spaces}, we construct \(\nu\) to be a distribution with density \(g(X) = \exp(\beta n \mathrm{tr}(AX^{\top}X))\prod_{i = 1}^{k}\psi(\lVert x_i \rVert) \). The function \(\psi\) is a smooth non-negative function satisfying \(\psi(t) = 1\) for \(t \in [0, 2]\) and \(\psi(t) = 0\) for \(t \geq 3\). Then \(\nu\) is an \(O(n)\)-invariant compactly supported distribution whose density is \(\exp(\beta n \mathrm{tr}(AX^{\top}X))\) for the region \(\left\{X \mid \lVert x_i \rVert \leq 2 \text{ for } i = 1, \ldots, k \right\}\).
By the construction of \(\nu\), we have
\begin{align*}
&\int_{X \in \R^{n \times k}}dX\, p(X) f(X^{\top}X)\\
=&\lim_{t_1, \ldots, t_{k} \to 0}\int_{X \in \R^{n \times k}} \nu(dX)f(X^{\top}X)\prod_{i=1}^{k}
\frac{1}{t_i}\chi{\left(\lVert x_i \rVert \in [1, 1 + t_i]\right)}.\\
=&\lim_{t_1, \ldots, t_{k} \to 0}\int_{S \in \splus} \mu_{S}(dS)C_{n,k}\det(S)^{(n-k-1)/2}\exp(\beta n \mathrm{tr}(AS))f(S)\prod_{i=1}^{k}
\frac{1}{t_i}\chi{\left(\lvert S_{ii} \rvert \in [1, (1 + t_i)^2]\right)},
\end{align*}
where the last equality holds because \(\lVert x_i\rVert^2 = S_{ii}\) when \(S = X^{\top}X\) and \(X = \left[x_1, \ldots, x_{k}\right]\).

We then exchange the order of limit and integration. Applying \Cref{prop: sphere dirac delta} again, one has 
\begin{align*}
&\int_{X \in \R^{n \times k}}dX\, p(X) f(X^{\top}X)\\
    =
    &\int_{S \in \splus}\mu_{S}(dS)
    C_{n,k}\exp(\beta n \mathrm{tr}(AS))f(S) \det(S)^{\frac{n-k-1}{2}}
    \left(\lim_{t_1, \ldots, t_{k} \to 0}\prod_{i=1}^{k}
\frac{1}{t_i}\chi{\left(S_{ii} \in [1, 1 + 2t_i + t_i^2]\right)}\right)\\
    =
    &2^{k}\int_{S \in \splus}\mu_{S}(dS) C_{n,k}
    \exp(\beta n \mathrm{tr}(AS))f(S) \det(S)^{\frac{n-k-1}{2}}\prod_{i= 1}^{k}\delta(1 - S_{ii}),
\end{align*}
where the last equality holds due to Fubini's theorem and \Cref{eqn: sphere dirac delta WTS 2}. Therefore, we have proven \Cref{eqn: change of variable for S} with \(p_S\) satisfying \Cref{eqn: measure for S}. Our statements for \(L\) are direct consequences of \Cref{eqn: change of variable for S}. Thus, we are done.
\end{proof}

It remains to prove \Cref{prop: sphere dirac delta}.
\begin{proof}
(Proof of \Cref{prop: sphere dirac delta})

We note that \Cref{eqn: sphere dirac delta WTS} directly follows from the coarea formula, and a detailed proof of \Cref{eqn: sphere dirac delta WTS} can be found in Chapter 3 of \cite{evans2018measure}.

It remains to show \Cref{eqn: sphere dirac delta WTS 2}. 
We assume for the rest of the proof that \(b\) is non-negative, and we note that the general case comes from the linearity of the integral. By non-negativity of \(b\), one has
\begin{equation*}
\frac{1}{t}\int_{1 - \min(t, \varepsilon)}^{1 + \min(t, \varepsilon)}dx\,b(x) \chi\left(\lVert x \rVert \in [1, 1+t + ct^2]\right) \geq \frac{1}{t}\int_{1 - \min(t, \varepsilon)}^{1 + \min(t, \varepsilon)}dx\,b(x) \chi\left(\lVert x \rVert \in [1, 1+t ]\right).
\end{equation*}
Then, taking the limit of \(t \to 0\), one has
\[
    \lim_{t \to 0}
    \frac{1}{t}\int_{1 - \min(t, \varepsilon)}^{1 + \min(t, \varepsilon)}dx\,b(x) \chi\left(x \in [1, 1+t + ct^2]\right) \geq b(1).
\]
On the other hand, let \(\eta > 0\) be any constant. When \(t < \eta/c\), one has \(ct^2 < \eta t\), and so the following bound holds:
\begin{equation*}
\frac{1}{t}\int_{1 - \min(t, \varepsilon)}^{1 + \min(t, \varepsilon)}dx\,b(x) \chi\left(\lVert x \rVert \in [1, 1+t + ct^2]\right) \leq \frac{1}{t}\int_{1 - \min(t, \varepsilon)}^{1 + \min(t, \varepsilon)}dx\,b(x) \chi\left(\lVert x \rVert \in [1, 1 + (1 + \eta)t ]\right).
\end{equation*}
Taking the limit of \(t \to 0\), one has 
\[
\lim_{t \to 0}
    \frac{1}{t}\int_{1 - \min(t, \varepsilon)}^{1 + \min(t, \varepsilon)}dx\,b(x) \chi\left(x \in [1, 1+t + ct^2]\right) \leq (1+\eta)b(1).
\]
Note that \(\eta\) can be arbitrarily close to \(0\), and so we have proven
\[
b(1) = \frac{1}{t}\int_{1-\varepsilon}^{1+\varepsilon}dx\,b(x) \chi\left(\lVert x \rVert \in [1, 1+t + ct^2]\right).
\]
Note that \(b(1) = \int_{1-\varepsilon}^{1+\varepsilon} dx \,b(x)\delta(1 - x)\), and so we are done.
\end{proof}

\paragraph{Distribution of \texorpdfstring{\((Q, R) = \Phi(X)\)}{(Q,R) = Phi(X)}}

Let \(O(n, k)\) denote the space of \(n \times k\) matrices \(Q\) satisfying \(Q^{\top}Q = I_k\).
Let \(G_{U}^{+}\) denote the space of \(k \times k\) upper-triangular matrices with all diagonal entries being positive. For an invertible matrix \(M \in \R^{n \times k}\), let \(\Phi(M) = (Q, R)\) be the unique output of the Gram-Schmidt algorithm with \(U \in O(n, k)\) and \(R \in G_{U}^{+}\).
We recall a basic statement on \(O(n)\)-invariant distributions:
\begin{prop}\label{prop: distribution on homogeneous spaces}
(Proposition 7.3 in \cite{eaton1983multivariate})
Let \(X\in \R^{n \times k}\) be a random matrix with distribution \(X \sim \nu\) so that \(X\) is almost surely of full rank. Let \((Q, R) = \Phi(X)\) be the unique output of Gram-Schmidt. Suppose that \(\nu\) is \(O(n)\)-invariant as defined in \Cref{prop: density on homogeneous spaces}.
Then the following statements are true:
\begin{enumerate}
    \item \(Q\) and \(R\) are statistically independent.
    \item The law of \(Q\) is a uniform distribution on \(O(n, k)\), i.e. \(Q\) follows the law of the first \(k\) columns of a matrix drawn from the Haar measure of \(O(n)\).
\end{enumerate}
\end{prop}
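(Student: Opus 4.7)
The plan is to leverage the equivariance of Gram--Schmidt under the action of $O(n)$, combined with the uniqueness of the invariant probability measure on the compact homogeneous space $O(n, k)$.

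The first step is to verify Gram--Schmidt equivariance: if $X = QR$ with $Q \in O(n,k)$ and $R \in G_U^+$, then for any $\Gamma \in O(n)$ one has $\Gamma X = (\Gamma Q) R$, and since $(\Gamma Q)^{\top}(\Gamma Q) = I_k$ and $R$ is still in $G_U^+$, uniqueness of the factorization gives $\Phi(\Gamma X) = (\Gamma Q, R)$. Combining this with the $O(n)$-invariance of $\nu$, for every fixed $\Gamma \in O(n)$ we obtain the distributional identity $(Q, R) \overset{d}{=} (\Gamma Q, R)$.

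Next I would disintegrate the joint law of $(Q, R)$ with respect to the marginal of $R$, producing a regular conditional distribution $\mu_{Q \mid R = r}$ on $O(n, k)$ for almost every $r$ in the support of $R$. The invariance $(Q, R) \overset{d}{=} (\Gamma Q, R)$ translates into the statement that, for almost every such $r$, the measure $\mu_{Q \mid R = r}$ is invariant under left multiplication by every $\Gamma \in O(n)$. Viewing $O(n, k)$ as the compact homogeneous space $O(n) / O(n - k)$ on which $O(n)$ acts transitively, classical Haar measure theory gives a unique $O(n)$-invariant probability measure, namely the uniform distribution on $O(n, k)$. Therefore $\mu_{Q \mid R = r}$ coincides with this uniform distribution for almost every $r$.

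Since the conditional distribution of $Q$ given $R$ does not depend on $R$, both conclusions follow simultaneously: $Q$ is uniformly distributed on $O(n, k)$, and $Q$ and $R$ are statistically independent. The main technical point will be the rigorous use of disintegration combined with uniqueness of the invariant measure; both are standard but worth spelling out, since the marginal law of $R$ is not assumed to admit a density. Beyond that, everything reduces to the elementary equivariance of the QR decomposition already used above.
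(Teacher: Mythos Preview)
Your proposal is correct and follows the standard route to this classical result. Note, however, that the paper does not give its own proof of this proposition: it is simply quoted as Proposition~7.3 from Eaton's \emph{Multivariate Statistics} and used as a black box. So there is no ``paper's proof'' to compare against here. Your argument via equivariance of the QR factorization, disintegration, and uniqueness of the $O(n)$-invariant probability on the Stiefel manifold $O(n,k)\cong O(n)/O(n-k)$ is essentially the proof one finds in Eaton (and elsewhere); the only point worth making airtight is the passage from ``for each fixed $\Gamma$, $\mu_{Q\mid R=r}$ is $\Gamma$-invariant for a.e.\ $r$'' to ``for a.e.\ $r$, $\mu_{Q\mid R=r}$ is invariant for all $\Gamma$,'' which is handled by second countability of $O(n)$ and weak continuity of the group action on measures.
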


For the \(n\)-vector model \(X \sim p\), the \(O(n)\) symmetry holds through the functional form. From \Cref{theorem: main for S}, we see that \(S = X^{\top}X\) is generically invertible for \(X \sim p\), which shows that \(X\) is also generically of full rank. Therefore, the results in \Cref{prop: distribution on homogeneous spaces} hold for any \(\beta\) and \(A\). In particular, \Cref{prop: distribution on homogeneous spaces} shows that (\ref{thm: characterization of spherical model pt 1})-(\ref{thm: characterization of spherical model pt 2}) in \Cref{thm: characterization of spherical model} is a simple consequence of \(O(n)\) invariance of the \(n\)-vector model.

\section{Free energy and correlation function at \texorpdfstring{\(n \to \infty\)}{n to infinity}}\label{sec: infinite case}
This section calculates the free energy and correlation function for the \(n\)-vector model and shows that they are as stated in \Cref{thm: characterization of free energy} and \Cref{thm: characterization of correlation function}. The main idea of the proof strategy is that the spin dimension \(n\) in the \(n\)-vector model is temperature-like. Following the observation, one can prove the desired limiting behavior with the Laplace method. 

We use \(\sGW \subset \R^{k \times k}\) to denote the set of positive semidefinite matrices with all diagonal entries equal to one.
In both results, it is implied that the regularized SDP task in \Cref{eqn: Goemans-Williamson with logdet} admits a unique maximizer. We prove the uniqueness for completeness.
\begin{prop}\label{prop: uniqueness of f_beta maximizer}
 The optimization task
\begin{equation}\label{eqn: uniqueness of f_beta maximizer WTS}
       {\mathrm{max}}_{S \in \sGW} \beta\mathrm{tr}(AS) +  \frac{1}{2}\logdet(S)
\end{equation}
has a unique maximizer \(S^{\star}_{\beta}\).
\end{prop}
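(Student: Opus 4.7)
The plan is to combine strict concavity of the log-determinant on the positive definite cone with convexity and compactness of the feasible region $\sGW$.

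First, I would verify that $\sGW$ is convex and compact. Convexity and closedness are immediate, since both $\{S \succeq 0\}$ and $\{S_{ii} = 1\}$ cut out convex closed subsets of $\R^{k \times k}$. Boundedness follows from the observation that for any $S \in \sGW$, positive semidefiniteness of the $2 \times 2$ principal minor indexed by $(i,j)$ forces $1 - S_{ij}^2 \geq 0$, hence $|S_{ij}| \leq 1$ for all $i,j$.

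Next, I would establish existence of a maximizer. Extending the objective
\[
f(S) := \beta \mathrm{tr}(AS) + \tfrac{1}{2}\logdet(S)
\]
to all of $\sGW$ by setting $\logdet(S) = -\infty$ whenever $S$ is singular, $f$ becomes upper semi-continuous on the compact set $\sGW$. Since $f(I_k) = \beta \mathrm{tr}(A)$ is finite while $f(S) \to -\infty$ as $\det(S) \to 0^+$, the supremum is finite, attained, and every maximizer lies in the relative interior $\{S \in \sGW : S \succ 0\}$.

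For uniqueness, I would invoke the classical fact that $\logdet$ is strictly concave on the open cone of positive definite matrices, while $\mathrm{tr}(AS)$ is linear; therefore $f$ is strictly concave on the interior. If two distinct maximizers $S_1, S_2$ existed, both would lie in the interior by the previous paragraph, and their midpoint $\tfrac{1}{2}(S_1 + S_2)$ would lie in $\sGW$ and be positive definite. Strict concavity would then give
\[
f\!\left(\tfrac{1}{2}(S_1 + S_2)\right) > \tfrac{1}{2} f(S_1) + \tfrac{1}{2} f(S_2),
\]
contradicting optimality of $S_1$ and $S_2$.

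No step here is particularly delicate; the only substantive ingredients are the barrier behavior of $\logdet$ at the boundary of the PSD cone (which forces maximizers into the interior) and the standard strict concavity of $\logdet$ on the positive definite cone, for which I would cite a standard reference rather than re-derive it. The $|S_{ij}| \leq 1$ bound and the convex-combination argument for uniqueness are both essentially one-line observations.
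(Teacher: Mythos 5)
Your proof is correct and follows essentially the same route as the paper's: compactness of \(\sGW\) via the \(|S_{ij}|\le 1\) bound, the barrier behavior of \(\logdet\) forcing maximizers into the positive definite interior, and strict concavity there for uniqueness. The only cosmetic difference is that you derive boundedness from the \(2\times2\) principal minors while the paper tests \(v = e_i \pm e_j\), which are equivalent observations.
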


\begin{proof}
Note that \(\sGW\) is an infinite intersection of closed sets and is thus closed. Let \(e_i\) denote the \(i\)-th standard basis vector in \(\R^{k}\). Boundedness of \(\sGW\) follows from the fact that each diagonal entry of \(S \in \sGW\) is one, and thus for \(v = e_i - e_j\) one has
\[
v^{\top}Sv = -2S_{ij} + 2 \geq 0.
\]
Similarly, taking \(v = e_i + e_j\) shows \(2S_{ij} + 2 \geq 0\). Therefore, all entries of \(S\) are bounded. Thus \(\sGW\) is compact.

Let \(\sigma_{\mathrm{min}}(S)\) denote the minimal singular value of \(S\). Due to the log-determinant term in \Cref{eqn: uniqueness of f_beta maximizer WTS}, there exists \(c > 0\) for which the maximum is obtained at \(\sGW \cap \{S \mid \sigma_{\mathrm{min}}(S) \geq c\} \). Moreover, the maximal singular value of \(S\) is bounded from above by \(k\). Therefore, it is equivalent to take the optimization to be over \(\sGW\cap \{S \mid \sigma_{\mathrm{min}}(S) \geq c\}\), on which the objective function of \Cref{eqn: uniqueness of f_beta maximizer WTS} is smooth and strictly concave. The existence of \(S^{\star}_{\beta}\) comes from the compactness of \(\sGW\cap \{S \mid \sigma_{\mathrm{min}}(S) \geq c\}\), and the uniqueness comes from the strict concavity of the objective function.
\end{proof}

The next statement is a formulation of the Laplace method, which we shall use for proving \Cref{thm: characterization of free energy} and \Cref{thm: characterization of correlation function}.
\begin{prop}\label{prop: laplace method}
Let \(P\) be a compact subset of \(\R^{d}\) with a nonempty interior and let \(\mu_{P}\) be the restricted Lebesgue measure of \(P\) as a subset of \(\R^{d}\).
Let \(f \colon P \to \R\) be a smooth and strictly concave function with a unique maximizer \(x^{\star}\) in the interior of \(P\) and we assume that \(f\) is strictly concave at a neighborhood of \(x^{\star}\). Let \(X\) be a random variable supported on \(P\) whose density with respect to \(\mu_{P}\) is given by \(p_{n}(x) = \exp{(nf(x))}g(x)\), where \(g \colon P \to \R_{\geq 0}\) is smooth, bounded with \(g(x^{\star}) \not = 0\).

Then, for any \(b \in (0,1/2)\), one has
\begin{equation}\label{eqn: prop concentration WTS 1}
    \lim_{n \to \infty} \mathbb{P}_{X \sim p_{n}} \left[ \lVert X - x^{\star} \rVert_{F} < n^{-1/2 + b}\right] = 1.
\end{equation}
Write \(f^{\star} = f(x^{\star})\). For sufficiently large \(n\), one has
\begin{equation}\label{eqn: prop concentration WTS 2}
     \ln{\left(\int_{P}p_{n}(x) dx\right)} = nf^{\star} -d/2\ln{n} + O(1),
\end{equation}
where \(O(1)\) depends on \(f, g, P\) but does not depend on \(n\).
\end{prop}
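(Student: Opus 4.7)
The plan is to apply the classical Laplace method, driven by two complementary estimates: a local quadratic bound around $x^\star$ and a global gap from the unique maximizer. Since $x^\star$ lies in the interior of $P$ and the Hessian $H := \hess{f(x^\star)}$ is symmetric negative definite by the assumed strict concavity at $x^\star$, Taylor's theorem furnishes constants $\rho > 0$ and $c > 0$ with $f(x) \le f^\star - c\|x-x^\star\|^2$ on $B(x^\star, \rho) \subset P$. By compactness of $P \setminus B(x^\star, \rho)$ together with continuity and uniqueness of the maximizer, there also exists $\delta > 0$ with $f(x) \le f^\star - \delta$ on that complement. These two bounds drive everything below.

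For \Cref{eqn: prop concentration WTS 2}, I would write $I_n := \int_P e^{nf(x)} g(x)\,dx$ and split it into the contribution from the shrinking ball $B_n := B(x^\star, n^{-1/3})$ and its complement in $P$. The complement is dominated by $e^{nf^\star - cn^{1/3}}\mu_P(P)\|g\|_\infty$ for large $n$, which is exponentially smaller than what comes next. On $B_n$, the substitution $y = \sqrt{n}(x-x^\star)$ produces a Jacobian $n^{-d/2}$ and the rescaled integrand $\exp\!\bigl(\tfrac{1}{2} y^\top H y + O(n^{-1/2}\|y\|^3)\bigr)\, g(x^\star + y/\sqrt n)$. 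Using the quadratic envelope as a dominating Gaussian and invoking dominated convergence, this piece converges to $g(x^\star)(2\pi)^{d/2}\det(-H)^{-1/2}$. Taking logarithms then yields $\ln I_n = nf^\star - \tfrac{d}{2}\ln n + O(1)$, with the $O(1)$ depending only on $g(x^\star)$, $\det(-H)$, and absolute constants.

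For \Cref{eqn: prop concentration WTS 1}, fix $b \in (0, 1/2)$ and let $r_n := n^{-1/2+b}$. On the set $\{x \in P \mid \|x-x^\star\| \ge r_n\}$, the two bounds above combine to yield $f(x) \le f^\star - c n^{-1+2b}$ for sufficiently large $n$: the near region $B(x^\star, \rho) \setminus B(x^\star, r_n)$ uses the quadratic bound directly, while the far region $P \setminus B(x^\star, \rho)$ uses the gap $\delta$, which eventually exceeds $c n^{-1+2b}$. Hence the numerator $\int_{\|x-x^\star\| \ge r_n} p_n(x)\,dx \le e^{nf^\star - c n^{2b}} \mu_P(P)\|g\|_\infty$, while the denominator $I_n \gtrsim n^{-d/2} e^{nf^\star}$ by the previous paragraph. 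The resulting ratio is $O\bigl(n^{d/2}\exp(-c n^{2b})\bigr) \to 0$, which is exactly the desired concentration.

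The main care is in controlling the cubic Taylor error after the $\sqrt n$ rescaling on $B_n$: for $y$ in the image of $B_n$ one has $\|y\| \le n^{1/6}$, so the cubic term is $O(1)$ uniformly and $o(1)$ pointwise, and the quadratic envelope supplies a Gaussian dominating function that justifies passing to the limit under the integral sign. All other ingredients---existence of $\delta$, integrability of Gaussian tails, boundedness of $g$, and the positivity $g(x^\star)>0$ needed for the lower bound on $I_n$---are immediate from smoothness of $f, g$ and compactness of $P$.
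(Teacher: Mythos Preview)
Your proposal is correct and follows essentially the same approach as the paper: both split the integral into a neighborhood of $x^\star$ and its complement, control the neighborhood via two-sided quadratic Taylor bounds (using $\nabla f(x^\star)=0$ and negative-definiteness of the Hessian) and the complement via the global gap $\delta$ coming from compactness and uniqueness of the maximizer. The only difference is in execution on the local piece---the paper evaluates the Gaussian integrals explicitly through the chi-squared/incomplete-gamma identity with a generic radius $\eps$ later set to $n^{-1/2+b}$, whereas you fix the scale $n^{-1/3}$, rescale by $\sqrt n$, and invoke dominated convergence; the two computations are interchangeable and yield the same $nf^\star-\tfrac{d}{2}\ln n+O(1)$ asymptotic.
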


We defer the proof of \Cref{prop: laplace method} to the end of this section.
We first prove \Cref{thm: characterization of free energy} assuming the Laplace method calculations.

\begin{proof}
(Proof of \Cref{thm: characterization of free energy})

We recall the definition of \(L\) in \Cref{theorem: main for S} and the operation \(S(L) = I_{k} + L + L^{\top}\). In particular, \(L\) is supported on \(\mathcal{L} = \{L \mid S(L) \in \splus\}\) with the following density with respect to the Lebesgue measure on \(\mathcal{L}\)
\[
p_{L}(L) := 2^{k}C_{n,k}\exp\left(\beta n\mathrm{tr}(S(L)A)\right)\det(S(L))^{(n-k-1)/2}.
\]
Moreover, one sees that \(\mathcal{L}\) is compact. To use \Cref{prop: laplace method}, we define
\[ L^{\star}_{\beta} =   {\mathrm{argmax}}_{L \in \mathcal{L}}\,\beta\mathrm{tr}(AS(L)) +  \frac{1}{2}\logdet(S(L)).
\]

As the mapping \(L \to S(L)\) a bijection from \(\mathcal{L}\) to \(\sGW\), one has \(S^{\star}_{\beta} = S(L^{\star}_{\beta})\). In other words, \(L^{\star}_{\beta}\) is the strictly lower-triangular part of \(S^{\star}_{\beta}\). From \Cref{prop: uniqueness of f_beta maximizer}, we know that \(S^{\star}_{\beta}\) is positive definite, and so \(L^{\star}_{\beta}\) lies in the interior of \(\mathcal{L}\). 

We write \(d := k(k-1)/2\). We emphasize the dependency of \(p_{L}\) on \(n\) by writing
\begin{equation}\label{eqn: def of p_n for L}
    p_{n}(L) = 2^{k}C_{n,k}\exp\left((n-k-1)f(L)\right)g(L),
\end{equation}
where \(g(L) := \exp\left((k+1)\beta\mathrm{tr}(S(L)A)\right)\), and \(f(L) := \beta\mathrm{tr}(AS(L)) + \frac{1}{2}\logdet(S(L))\). By construction one has \(L^{\star}_{\beta} = \argmax_{L \in \mathcal{L}}f(L)\). One sees that \(g(L^{\star}_{\beta}) > 0\). Therefore, all the assumptions of \Cref{prop: laplace method} hold, and we can apply the results to \(p_n\). Importantly, \(f(\mathcal{L}^{\star}_{\beta}) = q_{\beta}^{\star}\).

We use \(\mu_{L}\) to denote the Lebesgue measure on \(\mathcal{L}\). By \Cref{theorem: main for S}, one has
\[
\begin{aligned}
    Z_{n}(\beta) = &\int_{X \in \R^{n \times k}}dX\, p(X)\\
    = &\int_{S \in \splus} \mu_{\mathcal{S}}(dS) 2^{k}C_{n,k}\exp\left(\beta n\mathrm{tr}(SA)\right)
    \det(S)^{(n-k-1)/2}\prod_{i= 1}^{k}\delta(1 - S_{ii} ) 
    \\
    = &\int_{L \in \mathcal{L}} \mu_{L}(dL)\, 2^{k}C_{n,k}\exp\left(\beta n\mathrm{tr}(S(L)A)\right)\det(S(L))^{(n-k-1)/2}\\
    =&\int_{L \in \mathcal{L}} \mu_{L}(dL)p_n(L)\\
    =&\int_{L \in \mathcal{L}} \mu_{L}(dL)\exp\left((n-k-1)f(L)\right)g(L).
\end{aligned}
\]
Therefore, \Cref{eqn: prop concentration WTS 2} in \Cref{prop: laplace method} applies. For sufficiently large \(n\), one has
\begin{align*}
    &\int_{L \in \mathcal{L}} \mu_{L}(dL)\exp\left((n-k-1)f(L)\right)g(L)\\
    = &nf(L^{\star}_{\beta}) - k(k-1)/4\ln{n} + O(1)\\
    = &nq_{\beta}^{\star} - k(k-1)/4\ln{n} + O(1).
\end{align*}

Thus \(Q_{n}(\beta) = \ln{Z_{n}(\beta)} - \ln{Z_{n}(0)} = n q_{\beta}^{\star} - n q_{\beta = 0}^{\star} + O(1)\). 
For \(\beta = 0\), one has \(f(L) = \frac{1}{2}\logdet(S(L))\). By applying Cauchy-Schwarz inequality on the fact that \(\mathrm{tr}(S) = k\) for \(S \in \sGW\), one has \(q_{\beta = 0}^{\star} = \frac{1}{2}\logdet(I_k) = 0\). Therefore \[Q_{n}(\beta) = \ln{Z_{n}(\beta)} - \ln{Z_{n}(0)}= n q_{\beta}^{\star} + O(1),\]
and so we are done.
\end{proof}

We now prove \Cref{thm: characterization of correlation function}.
\begin{proof}
(Proof of \Cref{thm: characterization of correlation function})

To prove \Cref{eqn: concentration with rate WTS}, we note that \( \lVert S^{\star}_{\beta} - S(L)\rVert = 2 \lVert L^{\star}_{\beta} - L\rVert\). We see from the proof of \Cref{thm: characterization of free energy} that \Cref{prop: laplace method} applies to the distribution \(L \sim p_n\) as defined in \Cref{eqn: def of p_n for L}. Thus, for any \(b \in (0,1/2)\), one has
\[\lim_{n \to \infty} \mathbb{P}_{L \sim p_{n}} \left[ \lVert L - L^{\star}_{\beta} \rVert_{F} < (n-k-1)^{-1/2 + b}\right] = 1.\]

Noting that for \(n > 2k + 2\) one has \(n-k-1 > n/2\), and the following holds
\[(n-k-1)^{-1/2 + b} < 2n^{-1/2 + b}.\]

Combined with \( \lVert S^{\star}_{\beta} - S(L)\rVert = 2 \lVert L^{\star}_{\beta} - L\rVert\), we obtain
\begin{equation}\label{eqn: concentration with rate main step}
    \lim_{n \to \infty} \mathbb{P}_{S \sim p_{S}} \left[ \lVert S - S^{\star}_{\beta} \rVert_{F} < 4n^{-1/2 + b}\right] = 1.
\end{equation}

As \(4n^{-1/2 + b/2} < n^{-1/2 + b}\) for \(n\) sufficiently large, we see that \Cref{eqn: concentration with rate WTS} in \Cref{thm: characterization of correlation function} is implied by \Cref{eqn: concentration with rate main step}.

It remains to prove \Cref{eqn: concentration with rate WTS 2} in \Cref{thm: characterization of correlation function}. For the remainder of this proof, our use of the big \(O\) notation only includes the dependence on \(\beta\).
Let \(\sstargw\) be a (not necessarily unique) maximizer to the SDP \(\mathrm{max}_{S \in \sGW}\mathrm{tr}(AS)\). We write \(\fstargw = \mathrm{tr}(A\sstargw)\). The proof proceeds by considering a perturbation to \(\sstargw\) of the form \(S(\alpha) := \alpha\sstargw + (1-\alpha)I_{k}\). One can directly check that \(S(\alpha)\) is positive definite for \(\alpha \in (0,1)\). By the optimality of \(S^{\star}_{\beta}\) in \Cref{eqn: uniqueness of f_beta maximizer WTS}, one has
\begin{equation}\label{eqn: theorem Sbeta step 1}
\begin{aligned}
    &\mathrm{tr}(AS^{\star}_{\beta}) +  \frac{1}{2\beta}\logdet(S^{\star}_{\beta}) \\\geq 
&\fstargw\alpha + \mathrm{tr}(A)(1-\alpha) + \frac{1}{2\beta}\logdet(\alpha\sstargw + (1-\alpha)I_{k}).
\end{aligned}
\end{equation}

For the left hand side of \Cref{eqn: theorem Sbeta step 1}, one has \(\logdet(S^{\star}_{\beta}) \leq \mathrm{tr}(S^{\star}_{\beta} - I_k) = 0\). The right hand side of \Cref{eqn: theorem Sbeta step 1} satisfies \(\logdet(\alpha\sstargw + (1-\alpha)I_{k}) \geq k\ln{\left(1-\alpha\right)}\). As a consequence, \Cref{eqn: theorem Sbeta step 1} implies
\begin{equation}\label{eqn: theorem Sbeta step 2}
\begin{aligned}
    \mathrm{tr}(AS^{\star}_{\beta}) \geq 
\fstargw + (\mathrm{tr}(A) - \fstargw)(1-\alpha) + \frac{1}{2\beta}k\ln{\left(1-\alpha\right)}.
\end{aligned}
\end{equation}
In particular, one can plug in \(1-\alpha =  \frac{1}{\beta}\). Under this choice, one has \((\mathrm{tr}(A) - \fstargw)(1-\alpha) = O(\beta^{-1})\) and \(\frac{1}{2\beta}k\ln{\left(1-\alpha\right)} = O(\beta^{-1}\ln{\beta})\). Therefore, choosing \(\alpha = 1 - 1/\beta\) in \Cref{eqn: theorem Sbeta step 2} implies
\[
\mathrm{tr}(AS^{\star}_{\beta}) \geq \fstargw + O( \beta^{-1}\ln{\beta}),
\]
and thus we are done after taking \({\beta \to \infty}\).
\end{proof}

It remains to prove \Cref{prop: laplace method}.
\begin{proof}
(Proof of \Cref{prop: laplace method})

We use \(B(x, r)\) to denote the ball of radius \(r\) centered at \(x\). The big \(O\) notation in this proof only includes the dependency on \(n\). For proving \Cref{eqn: prop concentration WTS 1}, we claim that there exists constants \(\delta,m,M,\eta > 0\), all of which independent of \(n\), such that the following holds for \(\eps \in (0, \delta)\):
\begin{equation}\label{eq: thm concentration step 1}
\ln{\left(\int_{P \cap B(x^{\star}, \eps)}p_{n}(x) dx\right)} \geq nf^{\star}+ \ln{\gamma(d/2, nM\eps^2)} +  O(\ln(n)),
\end{equation}
and 
\begin{equation}\label{eq: thm concentration step 2}
\ln{\left(\int_{P - B(x^{\star}, \eps)}p_{n}(x) dx\right)} \leq  nf^{\star} - n\min{\left(\eta ,m\eps^2\right)} + O(1),
\end{equation}
where \(\gamma\) is the lower incomplete gamma function \cite{abramowitz1988handbook}.
Therefore, taking \(\eps = n^{-1/2 + b}\) for any \(b \in (0,1/2)\) implies that \(n\eps^2 \to \infty\), which in turn implies \[\ln{\gamma(d/2, nM\eps^2)} \to \ln{\Gamma(d/2)} = O(1).\]

We show why \Cref{eq: thm concentration step 1} and \Cref{eq: thm concentration step 2} imply \Cref{eqn: prop concentration WTS 1}. Taking \(\eps = n^{-1/2 + b}\), one sees that the right-hand side of \Cref{eq: thm concentration step 1} is much larger than that of \Cref{eq: thm concentration step 2}. Thus one has
\[
\lim_{n \to \infty}\frac{
\int_{P \cap B(x^{\star}, n^{1/2 - b})}p_{n}(x) dx
}{
\int_{P}p_{n}(x) dx
} = 1,
\]
which implies \Cref{eqn: prop concentration WTS 1}.

We then prove \Cref{eq: thm concentration step 1} and \Cref{eq: thm concentration step 2}.
Due to the smoothness and strict concavity of \(f\) around \(x^{\star}\), the point \(x^{\star}\) satisfies \(\nabla f(x^{\star}) = 0\) and there exist positive constants \(m, M\) so that \(-M I_{d}\preceq \nabla^{2}f(x^{\star}) \preceq -4m I_{d} \). Moreover, as \(f\) is smooth, it follows that there exists a sufficiently small radius \(\delta > 0\) so that \(B(x^{\star}, \delta) \subset P\), and \(x \in B(x^{\star}, \delta)\) satisfies \(-2MI_{d}\prec \nabla^{2}f(x) \prec -2mI_{d}\). The following holds by using the Taylor remainder theorem for \(x \in B(x^{\star}, \delta)\):
\begin{equation}
   f(x^{\star}) - M\lVert x^{\star} - x \rVert^2 < f(x) < f(x^{\star}) - m \lVert x^{\star} - x \rVert^2.
\end{equation}

By a similar continuity argument, by possibly shrinking \(\delta\), one can assume \(x \in B(x^{\star}, \delta)\) implies \( \frac{1}{2} g(x^{\star}) \leq g(x) \leq 2g(x^{\star})\). Moreover, as the maximizer is unique, one can further shrink \(\delta\) so that there exists \(\eta > 0\) such that \(x \not \in B(x^{\star}, \delta)\) implies \(f(x) < f(x^{\star}) - \eta\). By assumption, \(g\) is bounded on \(P\) and we let \(\gmax\) denote its supremum, i.e. \(\gmax = \sup_{x \in P}g(x)\). 

We prove \Cref{eq: thm concentration step 2}. For \(x \in \R^{d}\) and \(0< l < l'\), define \(A(x,l,l')\) as the annulus centered at \(x\) with radius parameter \((l,l')\), i.e. \(A(x, l, l') := B(x, l') - B(x,l)\). For \(\eps < \delta\), one computes
\begin{align*}
    &\int_{P - B(x^{\star}, \eps)}p_{n}(x) dx \\
    = 
    &\int_{P - B(x^{\star}, \delta)}p_{n}(x) dx +
    \int_{A(x^{\star}, \eps, \delta)}p_{n}(x) dx \\
    \leq
    &\gmax\left(\mu_{P}(P)\exp{\left(n(f^{\star} - \eta)\right)} + \mu_{P}\left(B(x^{\star}, \delta)\right)\exp{\left(n(f^{\star} - m\eps^2)\right)}\right)
    \\
    \leq
    &\gmax\mu_{P}(P)\left(\exp{\left(n(f^{\star} - \eta)\right)} + \exp{\left(n(f^{\star} - m\eps^2)\right)}\right).
\end{align*}
Therefore one has
\begin{equation}
    \int_{P - B(x^{\star}, \eps)}p_{n}(x) dx \leq 2\gmax\mu_{P}(P)\exp{\left(nf^{\star} - n\min{\left(\eta, m \eps^2\right)} \right)},
\end{equation}
which implies \Cref{eq: thm concentration step 2}.

We now prove \Cref{eq: thm concentration step 1}. The proof uses the fact that the chi-squared distribution \(\chi^{2}(d)\) satisfies \(\mathbb{P}_{y \sim \chi^2(d)}\left[ y \in [0, l] \right] = \frac{\gamma(d/2, l/2)}{\Gamma(d/2)}\). For \(\eps < \delta\), direct computation shows
\begin{align*}
    &\int_{B(x^{\star}, \eps)}p_{n}(x) dx\\ \geq 
    &\int_{B(x^{\star}, \eps)}
    \exp{\left(n(f^{\star} - M \lVert x - x^{\star} \rVert_2^2)\right)}g(x) dx \\
    \geq 
    &\frac{1}{2}g(x^{\star})\exp{(nf^{\star})}\int_{B(0, \eps)}
    \exp{\left(-nM \lVert x  \rVert_2^2\right)} dx \\
    =&\frac{1}{2}g(x^{\star})\exp{(nf^{\star})}(2\pi)^{d/2}(\sqrt{2nM})^{-d}\int_{B(0, \sqrt{2nM}\eps)}
   \frac{\exp{\left(-1/2 \lVert x  \rVert_2^2\right)}}{(2\pi)^{d/2}}  dx \\
   =&\frac{1}{2}g(x^{\star})\exp{(nf^{\star})}(2\pi)^{d/2}(\sqrt{2nM})^{-d}\mathbb{P}_{x \sim \mathcal{N}(0, I_d)}\left[ \sum_{i=1}^{d}x_i^2 \in [0, 2nM\eps^2] \right] \\
   =&\frac{1}{2}g(x^{\star})\exp{(nf^{\star})}(2\pi)^{d/2}(\sqrt{2nM})^{-d}\mathbb{P}_{y \sim \chi^2(d)}\left[ y \in [0, 2nM\eps^2] \right] \\
   =&\frac{1}{2}g(x^{\star})\exp{(nf^{\star})}(2\pi)^{d/2}(\sqrt{2nM})^{-d}\frac{\gamma(d/2, nM\eps^2)}{\Gamma(d/2)},
\end{align*}
which proves \Cref{eq: thm concentration step 1} by taking log on both sides.

We then prove \Cref{eqn: prop concentration WTS 2}. Similar to the previous computation, one can upper bound the integral of \(p_n\) within \(\int_{B(x^{\star}, \eps)}\) by
\begin{align*}
    &\int_{B(x^{\star}, \eps)}p_{n}(x) dx\\ \leq 
    &\int_{B(x^{\star}, \eps)}
    \exp{\left(n(f^{\star} - m \lVert x - x^{\star} \rVert_2^2)\right)}g(x) dx \\
    \leq 
    &2g(x^{\star})\exp{(nf^{\star})}\int_{B(0, \eps)}
    \exp{\left(-nm \lVert x  \rVert_2^2\right)} dx \\
   =&2g(x^{\star})\exp{(nf^{\star})}(2\pi)^{d/2}(\sqrt{2nm})^{-d}\frac{\gamma(d/2, nm\eps^2)}{\Gamma(d/2)}.
\end{align*}
Thus, the derived lower and upper bounds imply
\[
\ln{\left(\int_{B(x^{\star}, \eps)}p_{n}(x) dx\right)} \geq nf^{\star} -d/2\ln{n} + \ln{\frac{\gamma(d/2, nM\eps^2)}{\Gamma(d/2)}} + O(1),
\]
and
\[
\ln{\left(\int_{B(x^{\star}, \eps)}p_{n}(x) dx\right)} \leq nf^{\star} -d/2\ln{n} + \ln{\frac{\gamma(d/2, nm\eps^2)}{\Gamma(d/2)}} + O(1).
\]
Again one can take \(\eps = n^{-1/2 + b}\) for \(b \in (0,1/2)\), in which case \(\ln{\frac{\gamma(d/2, nc\eps^2)}{\Gamma(d/2)}} \to 0\) for any positive \(c\). We take sufficiently large \(n\) so that \(n^{-1/2 + b} \leq \delta\), for which the derived bounds can be combined to the estimate
\[
\ln{\left(\int_{B(x^{\star}, \eps)}p_{n}(x) dx\right)} = nf^{\star} -d/2\ln{n} + O(1).
\]

As the contribution from outside \(B(x^{\star}, \eps)\) is asymptotically negligible, for sufficiently large \(n\) one has
\[
\int_{B(x^{\star}, \eps)}p_{n}(x) dx \leq \int_{P}p_{n}(x) dx \leq 2\int_{B(x^{\star}, \eps)}p_{n}(x) dx.
\]
Therefore, one has
\[
\ln{\left(\int_{P}p_{n}(x) dx\right)} = nf^{\star} -d/2\ln{n} + O(1),
\]
which proves \Cref{eqn: prop concentration WTS 2}.
\end{proof}

\section{Sampling from the \texorpdfstring{\(n\)}{n}-vector model}\label{sec: finite case}
This section discusses numerical sampling from the \(n\)-vector model \(p\) in \Cref{eqn: spherical model}. From \Cref{prop: distribution on homogeneous spaces}, it is shown that sampling \(X=[X_1, \ldots, X_k] \sim p\) can be done by sampling \(Q, R\) so that \((Q, R)\) are distributed according to the output of \(\Phi(X)\). Moreover, generating \(Q\) from the Haar measure of \(O(n)\) is efficient, e.g., by performing singular value decomposition on random matrices from the Gaussian orthogonal ensemble. Therefore, being able to sample from the distribution of \(R\) would allow one to sample from \(X \sim p\).

To see why it might be desirable to sample \(X \sim p\) from \(R\), we discuss two natural alternative directions.
One way is to directly sample \(X=[X_1, \ldots, X_k] \in \R^{n \times k}\) from the distribution function \(p\). However, directly sampling from \(p\) is quite cumbersome due to the spherical constraint that  \(\lVert X_i \rVert = 1\) for \(i = 1, \ldots, k\). Another proposal is to sample \(S = X^{\top}X\), as \Cref{theorem: main for S} provides a simple distribution function for \(S\). However, sampling from \(S\) is arguably more difficult than sampling from \(X\), as one would then need to perform sampling on the manifold of semidefinite matrices.

This section gives the theoretical background for two methods to sample \(R\). In the first proposed method, the goal is approximate sampling. Having solved the regularized SDP in \Cref{eqn: Goemans-Williamson with logdet}, one can take \(R = R^{\star}\) with \(R^{\star}\) in \Cref{thm: characterization of spherical model}. Henceforth, we generate new samples of \(Q\) from the Haar measure of \(O(n)\), and one can generate approximate samples of \(p\) by taking \(X = QR^{\star}\). The proposal is well-defined, and \Cref{sec: proof of spherical model characterization} justifies the proposal by proving \Cref{thm: characterization of spherical model} holds.

In the second proposed method, the goal is to perform MCMC sampling from \(R\). In \Cref{sec: distribution formula of R}, we derive the analytic formula for the distribution of \(R\). Our formula shows that the strictly upper triangular entries of \(R\) fully determine the distribution of \(R\). Moreover, we show that the strictly upper triangular part of \(R\) has a probability density function. While this work does not focus on the implementation of the exact sampling of \(R\), the formula for the density function would allow conventional MCMC samplers to be used, e.g., the Gibbs sampler \cite{geman1984stochastic, liu2001monte}.

\subsection{Proof of Theorem \ref{thm: characterization of spherical model}}\label{sec: proof of spherical model characterization}
Due to the results proven in \Cref{sec: background} and \Cref{sec: infinite case}, the proof of \Cref{thm: characterization of spherical model} is simple.
\begin{proof}
(Proof of \Cref{thm: characterization of spherical model})

\Cref{prop: distribution on homogeneous spaces} shows that (\ref{thm: characterization of spherical model pt 1})-(\ref{thm: characterization of spherical model pt 2}) in \Cref{thm: characterization of spherical model} is a simple consequence of the \(O(n)\) invariance of the \(n\)-vector model. By the property of the Gram-Schmidt decomposition, it follows that the term \(R\) for \((Q, R) = \Phi(X)\) coincides with the right Cholesky factor of \(S = X^{\top}X\). Therefore, it remains to show that the Cholesky factorization is stable, and then the remaining claims in \Cref{thm: characterization of spherical model} follow as a consequence of \Cref{thm: characterization of correlation function}.

Let \(X \sim p\) for \(p\) in \Cref{eqn: spherical model}. We let \(S = X^{\top}X\) and let \(R\) be the right Cholesky factor of \(S\). From \Cref{prop: uniqueness of f_beta maximizer} we have shown that \(S^{\star}_{\beta}\) is positive definite. We quote the following result directly from Theorem 1.1 in \cite{sun1991perturbation}: Let \(\kappa_{2}(S^{\star}_{\beta})\) be the condition number of \(S^{\star}_{\beta}\). We use \(\lVert \cdot \rVert_{2}\) to denote the matrix operator norm. Then, under the event that \(\lVert (S^{\star}_{\beta})^{-1}\rVert_{2}\lVert S - S^{\star}_{\beta} \rVert_{F} <\frac{1}{2}\), one has
\begin{equation}\label{eqn: stability of cholesky step 1}
    \frac{\lVert R - R^{\star} \rVert_{F} }{\lVert R^{\star} \rVert_{F}} \leq  \frac{
        \sqrt{2}\kappa_{2}(S^{\star}_{\beta})\lVert S - S^{\star}_{\beta} \rVert_{F}/\lVert S^{\star}_{\beta} \rVert_{F}
    }{
    1 + \sqrt{1 - 2\kappa_{2}(S^{\star}_{\beta})\lVert S - S^{\star}_{\beta} \rVert_{F}/\lVert S^{\star}_{\beta}\rVert_2}
    }.
\end{equation}

For the proof, we can discard the denominator on the right-hand side of \Cref{eqn: stability of cholesky step 1}. Thus, under the event that \(\lVert (S^{\star}_{\beta})^{-1}\rVert_{2}\lVert S - S^{\star}_{\beta} \rVert_{F} <\frac{1}{2}\), one has
\begin{equation*}
    \frac{\lVert R - R^{\star} \rVert_{F} }{\lVert R^{\star} \rVert_{F}} \leq  
        \sqrt{2}\kappa_{2}(S^{\star}_{\beta})\lVert S - S^{\star}_{\beta} \rVert_{F}/\lVert S^{\star}_{\beta} \rVert_{F}.
\end{equation*}

Therefore, for the constant \(C = \sqrt{2}\kappa_{2}(S^{\star}_{\beta})\lVert R^{\star} \rVert_{F}/\lVert S^{\star}_{\beta} \rVert_{F}\), which does not depend on \(n\), one has
\begin{equation}
    \label{eqn: stability of cholesky step 2}
    \lVert R - R^{\star} \rVert_{F} \leq C\lVert S - S^{\star}_{\beta} \rVert_{F}.
\end{equation}
From \Cref{thm: characterization of correlation function} we know that the probability of the event \(\lVert (S^{\star}_{\beta})^{-1}\rVert_{2}\lVert S - S^{\star}_{\beta} \rVert_{F} <\frac{1}{2}\) converges to one when \(n \to \infty\). Therefore, the inequality in \Cref{eqn: stability of cholesky step 2} holds with probability converging to one with \(n \to \infty\). Thus, we have proven
\begin{equation}
    \label{eqn: stability of cholesky step 3}
    \lim_{n \to \infty} \mathbb{P}\left[ \lVert R - R^{\star} \rVert_{F} < Cn^{-1/2 + b}\right] = 1.
\end{equation}
As \(Cn^{-1/2 + b/2} < n^{-1/2 + b}\) for \(n\) sufficiently large, we see that \Cref{eqn: stability of cholesky step 3} implies \Cref{eqn: main for R} in \Cref{thm: characterization of spherical model}.
\end{proof}

\subsection{Distribution formula of \texorpdfstring{\(R\)}{R}}\label{sec: distribution formula of R}

We use \(G_{U}^{+} \subset \R^{k \times k}\) to denote the space of \(k \times k\) upper-triangular matrices with positive diagonal entries. Let \(X = \left[X_1, \ldots, X_k\right] \sim p\) and let \(R\) be from \((Q, R) = \Phi(X)\). One can see from \Cref{theorem: main for S} that \(X\) is generically invertible, and so one can assume that \(\Phi(X)\) is always well-defined. It is clear that \(R\) is supported on \(G_{U}^{+}\). One sees that \(G_{U}^{+}\) is an open subset of \(\R^{k(k+1)/2}\), and one uses \(\mu_{R}\) to denote the Lebesgue measure on \(G_{U}^{+}\).
We prove the following statement on the distribution function of \(R\).

\begin{thm}\label{thm: main for R}
Let \(p\) be the \(n\)-vector model in \Cref{eqn: spherical model}. Define \(Q(X), R(X)\) so that \((Q(X), R(X)) := \Phi(X)\) is the unique output of Gram-Schmidt. For a continuous function \(f: G_{U}^{+} \to \R\), one has
\[
\int_{X \in \R^{n \times k}} dX\, p(X) f(R(X)) = \int_{R \in G_{U}^{+}} \mu_{R}(dR)p_R(R) f(R),
\]
where for \(R = \left[r_1, \ldots, r_k\right]\) one has
\begin{equation}\label{eqn: distribution of R}
p_R(R) = 2^{k}C_{n,k}\exp\left(\beta n\sum_{ij}\left<r_{i}, r_{j}\right>A_{ij}\right)
    \prod_{j= 1}^{k}R_{jj}^{n-j}\prod_{j= 1}^{k}\delta(1 - \lVert r_{j}\rVert^2 ).
\end{equation}
\end{thm}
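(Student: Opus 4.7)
My plan is to obtain the density of $R$ by changing variables from $S = X^\top X$ to $R$ in the formula provided by \Cref{theorem: main for S}, using the bijection $S = R^\top R$ supplied by the right Cholesky factorization on $G_U^+$. Since the Gram--Schmidt output $(Q, R) = \Phi(X)$ satisfies $X^\top X = R^\top Q^\top Q R = R^\top R$ with $R$ upper-triangular and positive-diagonal, the matrix $R(X)$ is exactly the unique right Cholesky factor of $S$. Hence $R$ is a measurable function of $S$, and \Cref{theorem: main for S} applied to the test function $S \mapsto f(R(S))$ gives
\[
\int_{\R^{n \times k}} p(X)\, f(R(X))\, dX \;=\; \int_{\splus} p_S(S)\, f(R(S))\, \mu_{\mathcal{S}}(dS).
\]

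I next invoke the classical Jacobian for the Cholesky map (see, e.g., \cite{eaton1983multivariate}): with Lebesgue measure on the upper-triangular entries of $S$ and on the entries of $R \in G_U^+$, one has $dS = 2^k\prod_{j=1}^k R_{jj}^{k-j+1}\, dR$. Combined with the factor $\det(S)^{(n-k-1)/2} = \bigl(\prod_j R_{jj}^2\bigr)^{(n-k-1)/2} = \prod_j R_{jj}^{n-k-1}$ appearing in $p_S$, the total exponent on each $R_{jj}$ collapses to $(n-k-1)+(k-j+1) = n-j$, matching the exponent in the claimed density. The remaining pieces of $p_S$ transport trivially: $S_{ii}(R) = \|r_i\|^2$ gives $\delta(1 - S_{ii}) = \delta(1 - \|r_i\|^2)$, and $\mathrm{tr}(SA) = \sum_{i,j}\langle r_i, r_j\rangle A_{ij}$ by $S = R^\top R$. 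Assembling these pieces yields the density $p_R$ displayed in the theorem.

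The bulk of the work is bookkeeping, but the step that requires care is the recombination of the Cholesky Jacobian with $\det(S)^{(n-k-1)/2}$ so that the exponents on the $R_{jj}$ collapse exactly to $n-j$. As a cross-check (and essentially independent derivation), one may bypass \Cref{theorem: main for S} altogether and change variables directly via the QR decomposition $X = QR$, using the standard formula $dX = \prod_j R_{jj}^{n-j}\,(Q^\top dQ)\, dR$. Here the $O(n)$-invariance of $p$ forces $p(QR)$ to be independent of $Q$ (since $\|(QR)_i\| = \|r_i\|$ and $\mathrm{tr}(A(QR)^\top(QR)) = \sum_{i,j}\langle r_i, r_j\rangle A_{ij}$), so the $Q$-integral factors out and contributes the volume of the Stiefel manifold $O(n,k)$. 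A short computation using $\Gamma_k(n/2) = \pi^{k(k-1)/4}\prod_{j=1}^k \Gamma((n-j+1)/2)$ and the definition of $C_{n,k}$ shows this volume equals $2^k C_{n,k}$, delivering the prefactor in the claimed density without the determinant-power bookkeeping.
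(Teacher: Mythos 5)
Your primary route is genuinely different from the paper's. The paper never passes through \Cref{theorem: main for S}: it mollifies \(p\) into an absolutely continuous, compactly supported \(O(n)\)-invariant law \(\nu\), applies Eaton's Proposition 7.5 (\Cref{prop: density on homogeneous spaces version R}) to get the \(R\)-density of \(\nu\) directly---this is exactly your ``cross-check'' computation \(dX=\prod_j R_{jj}^{n-j}(Q^{\top}dQ)\,dR\) with Stiefel volume \(2^kC_{n,k}\)---and then recovers the spherical constraints via the characteristic-function limits of \Cref{prop: sphere dirac delta} and Fubini. Your main argument (Cholesky change of variables applied to the \(S\)-density) is a legitimate and arguably more economical alternative: the Jacobian \(dS=2^k\prod_j R_{jj}^{k-j+1}dR\) is correct, the exponent bookkeeping \((n-k-1)+(k-j+1)=n-j\) is correct, and the trace term and the constraints \(S_{ii}=\lVert r_i\rVert^2\) transport as you say. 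Note, though, that your cross-check route cannot be applied verbatim to \(p\), which is singular with respect to Lebesgue measure; that is precisely why the paper inserts the mollification step before invoking Eaton.

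There is, however, a concrete gap in the constant, which you assert ``assembles'' but do not check. Multiplying the paper's stated \(p_S=2^kC_{n,k}\exp(\cdots)\det(S)^{(n-k-1)/2}\prod_i\delta(1-S_{ii})\) by your Jacobian \(2^k\prod_jR_{jj}^{k-j+1}\) yields \(4^kC_{n,k}\), not the claimed \(2^kC_{n,k}\); the two theorems cannot both hold with their stated prefactors given your (correct) Jacobian. A direct check at \(k=1\), \(\beta=0\) gives \(\int_{\R^n}\delta(1-\lVert x\rVert^2)f(\lVert x\rVert^2)dx=\tfrac12\mathrm{Vol}(\mathbb{S}^{n-1})f(1)=C_{n,1}f(1)\), so the stated \(p_R\) is the correct one and the stated \(p_S\) carries a spurious \(2^k\) (traceable to conflating \(\delta(1-\lVert x\rVert)\) with \(\delta(1-\lVert x\rVert^2)\)). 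Since every density in the paper is used unnormalized, nothing downstream breaks, but your proof as written cannot deliver the displayed formula from the displayed input; you must either first correct the \(2^k\) in \Cref{theorem: main for S} or explicitly flag the mismatch. A smaller point: \Cref{theorem: main for S} is stated for \(f\) continuous on \(\R^{k\times k}\), whereas \(S\mapsto f(R(S))\) is continuous only on the positive-definite interior of \(\splus\) and need not extend to the boundary, so a truncation or approximation remark is needed before you may apply it to this composite test function.
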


We discuss how \Cref{thm: main for R} leads to an exact sampling strategy for \(R\). The formula in \Cref{eqn: distribution of R} implies that \(R_{jj}\) is completely determined by \((R_{ij})_{i < j}\) according to \[R_{jj} = \sqrt{1 - \sum_{i = 1}^{j-1} R_{ij}^2}.\] Therefore, one can marginalize out the redundant \((R_{jj})_{j = 1}^{k}\) variable. By a simple change of variable, one obtains the following (unnormalized) probability density function of \(\{R_{ij}\}_{i<j}\) with respect to the Lebesgue measure over \(\R^{k(k-1)/2}\):
\begin{equation}
\begin{aligned}
\label{eqn: sphere density}
&p_{U}(\{R_{ij}\}_{i<j}) = \exp\left(\beta n\sum_{j = 1}^{k}\sum_{i,i'=1}^{j}R_{ij} R_{i'j}A_{ii'}\right)
    \prod_{j= 1}^{k}R_{jj}^{n-j-1} \text{, where}\\
    &R_{jj} = \sqrt{1 - \sum_{i = 1}^{j-1} R_{ij}^2}, \text{ for  \(j = 1,\ldots, k\)}.
\end{aligned}
\end{equation}
One can thus perform MCMC sampling on the joint variable \(\{R_{ij}\}_{i<j}\) by \Cref{eqn: sphere density}. A slight difficulty is that the variables \(\{R_{ij}\}_{i<j}\) needs to satisfy \(\sum_{i = 1}^{j-1} R_{ij}^2 \leq 1\) for any \(j = 1,\ldots, k\). One possible proposal is to perform the Gibbs sampler \cite{geman1984stochastic} by only updating one \(R_{ij}\) variable at a time. By fixing all other entries and only updating \(R_{ij}\), one sees that the support for \(R_{ij}\) is an interval and can be easily calculated. As the conditional distribution of \(R_{ij}\) is known through \(p_{U}\) in \Cref{eqn: sphere density}, updating \(R_{ij}\) is simple.

We now prove \Cref{thm: main for R}. Similar to \Cref{theorem: main for S}, we use an intermediate result for \(O(n)\)-invariant distributions. The result is proven in \cite{eaton1983multivariate} and we quote it here.
\begin{prop}\label{prop: density on homogeneous spaces version R}
(Proposition 7.5 of \cite{eaton1983multivariate})
Let \(X\in \R^{n \times k}\) be a random matrix with distribution \(X \sim \nu \). Suppose that \(\nu\) satisfies the assumption in \Cref{prop: density on homogeneous spaces}. In other words, we assume \(\nu\) is \(O(n)\)-invariant and the density for \(X\) is defined by \(g(X) = h(X^{\top}X)\). Then \(R = R(X)\) has the following density \(g_R\) with respect to \(\mu_{R}\):
\[
g_R(R) = 2^{k}C_{n,k}h(R^{\top}R)\prod_{j = 1}^{k}R_{jj}^{n-j}.
\]
In particular, for a continuous function \(f: G_{U}^{+} \to \R\), one has
\[
\int_{X \in \R^{n \times k}} \nu(dX)\, f(R(X)) = \int_{R \in G_{U}^{+}} \mu_{R}(dR)g_R(R) f(R).
\]
\end{prop}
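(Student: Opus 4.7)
The plan is to prove the proposition by changing variables from \(X\) to \((Q, R) = \Phi(X)\), using the fact that the density \(g(X) = h(X^\top X)\) depends on \(X\) only through \(R^\top R\) to decouple the \(Q\)-integration, and then identifying the resulting constant. Since \(\nu\) admits a Lebesgue density, \(X\) is almost surely of full rank, so \(\Phi\) is a diffeomorphism from \(\{X \in \R^{n \times k} : \mathrm{rank}(X) = k\}\) onto \(O(n,k) \times G_U^+\) and the change-of-variables formula applies.

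The key step is to compute the Jacobian of the inverse map \((Q, R) \mapsto QR\). I would parametrize infinitesimal displacements of \(Q\) by writing \(dQ = QB + Q_\perp C\), where \(Q_\perp\) completes \(Q\) to an orthogonal matrix, \(B = Q^\top dQ \in \R^{k \times k}\) is antisymmetric (with \(k(k-1)/2\) independent parameters \(B_{ij}\) for \(i > j\)), and \(C = Q_\perp^\top dQ \in \R^{(n-k) \times k}\) is unconstrained. Since \(dX = dQ\, R + Q\, dR\), projecting onto the orthonormal frame \([Q, Q_\perp]\) gives \(Q^\top dX = BR + dR\) and \(Q_\perp^\top dX = CR\). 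Because \(R\) is upper triangular with positive diagonal and \(B\) has zero diagonal, \(\diag(Q^\top dX) = (dR_{jj})_j\), and the strictly upper triangular entries of \(Q^\top dX\) consist of \(dR_{ij}\) plus a linear function of \(B\). Ordering the input variables as \((dR_{ij})_{i \leq j}\), \((B_{ij})_{i>j}\), \((C_{lm})\), the Jacobian matrix becomes block triangular: the \(dR\)-diagonal block is the identity, the map \(B \mapsto (BR)|_{i>j}\) is itself triangular row-by-row and contributes \(\prod_{i=2}^{k}\prod_{j=1}^{i-1} R_{jj} = \prod_{j=1}^{k} R_{jj}^{k-j}\), and the map \(C \mapsto CR\) acts as \(R\)-multiplication on each of the \(n-k\) rows and contributes \((\det R)^{n-k} = \prod_{j=1}^{k} R_{jj}^{n-k}\). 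Multiplying these yields the classical Stiefel Jacobian
\begin{equation*}
dX \;=\; \prod_{j=1}^{k} R_{jj}^{n-j}\; dR\, dQ,
\end{equation*}
where \(dQ\) denotes the unnormalized invariant volume on \(O(n,k)\) induced by this parametrization.

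Substituting into \(\int f(R(X)) g(X)\, dX\) and using \(g(X) = h(R^\top R)\), the integrand is constant in \(Q\) at fixed \(R\), so the \(Q\)-integral decouples and produces the total Stiefel volume \(\mathrm{vol}(O(n,k))\) in this parametrization. The constant \(2^k C_{n,k}\) can then be pinned down by matching against the footnote formulas for \(\omega(n,k)\) and \(c(n,k)\); a clean sanity check is to specialize to the Gaussian baseline \(h(S) = (2\pi)^{-nk/2}\exp(-\mathrm{tr}(S)/2)\), for which the claimed \(g_R\) must integrate to one on \(G_U^+\), reproducing the classical Bartlett decomposition of a Gaussian matrix and thereby fixing the prefactor uniquely.

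The main obstacle is the Jacobian computation itself, which requires careful bookkeeping in three places: verifying (i) that \(\diag(BR) = 0\) for antisymmetric \(B\) and upper triangular \(R\), so the diagonal of \(Q^\top dX\) cleanly recovers \(dR_{jj}\); (ii) the row-by-row triangular structure of the \(B\)-block that produces \(\prod_j R_{jj}^{k-j}\); and (iii) the combination with the \(R\)-multiplication block in \(C\) that upgrades the total exponent to \(n-j\). Once these Jacobian details are nailed down, the \(O(n)\)-invariance decouples \(Q\) from the integrand and the constant matching is routine.
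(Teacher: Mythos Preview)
The paper does not prove this proposition at all: it is quoted verbatim as Proposition~7.5 of Eaton's textbook and used as a black box. So there is no ``paper's own proof'' to compare against; your proposal supplies an argument where the paper simply cites one. Your route---parametrize the full-rank set by \((Q,R)\), compute the Jacobian of \((Q,R)\mapsto QR\) in the Stiefel frame \([Q,Q_\perp]\), and pin down the constant by specializing to the Gaussian case---is exactly the standard derivation (and essentially what Eaton does), and the final Jacobian \(\prod_j R_{jj}^{n-j}\) and the constant identification are correct.

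There is one genuine slip in your bookkeeping, though it does not ultimately break the argument. Your claim~(i) that \(\diag(BR)=0\) for antisymmetric \(B\) and upper-triangular \(R\) is false: for instance with \(k=2\) one has \((BR)_{22}=B_{21}R_{12}\), which is generically nonzero. Consequently \(\diag(Q^\top dX)\) is \emph{not} simply \((dR_{jj})_j\). The block-triangularity you need comes instead from the observation that the \emph{strictly lower-triangular} part of \(Q^\top dX\) equals \((BR)|_{i>j}\) and is independent of \(dR\) (since \(dR\) is upper triangular), while the upper-triangular part equals \(dR + (BR)|_{i\le j}\). Ordering outputs as (upper-triangular of \(Q^\top dX\); strictly lower-triangular of \(Q^\top dX\); \(Q_\perp^\top dX\)) against inputs \((dR; B; C)\) already gives a block upper-triangular Jacobian
\[
\begin{pmatrix} I & * & 0 \\ 0 & J_B & 0 \\ 0 & 0 & J_C \end{pmatrix},
\]
and your computations of \(\det J_B=\prod_j R_{jj}^{k-j}\) and \(\det J_C=(\det R)^{n-k}\) are correct. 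So the conclusion stands once you replace the incorrect diagonal claim with the correct lower/upper split.
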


The proof of \Cref{thm: main for R} is similar to \Cref{theorem: main for S} and is done by utilizing \Cref{prop: sphere dirac delta}. We give the proof here.

\begin{proof}
    (Proof of \Cref{thm: main for R})

We use the construction of \(\nu\) as in the proof for \Cref{theorem: main for S}. As \(f\) is continuous, we use \Cref{prop: sphere dirac delta} to write
\begin{align*}
&\int_{X \in \R^{n \times k}} dX \, p(X) f(R(X)) \\ = &\lim_{t_1, \ldots, t_{k} \to 0}\int_{X \in \R^{n \times k}} dX \exp(\beta n \mathrm{tr}(AX^{\top}X))f(R(X))\prod_{i=1}^{k}
\frac{1}{t_i}\chi{\left(\lVert x_i \rVert \in [1, 1 + t_i]\right)}.
\end{align*}

By the construction of \(\nu\), it follows that \Cref{prop: density on homogeneous spaces version R} applies and we can write
\begin{align*}
&\int_{X \in \R^{n \times k}}dX\, p(X) f(R(X))\\
=&\lim_{t_1, \ldots, t_{k} \to 0}\int_{X \in \R^{n \times k}} \nu(dX)f(R(X))\prod_{i=1}^{k}
\frac{1}{t_i}\chi{\left(\lVert x_i \rVert \in [1, 1 + t_i]\right)}.\\
=&\lim_{t_1, \ldots, t_{k} \to 0}\int_{R \in G_{U}^{+}}2^{k}C_{n, k}\exp(\beta n \mathrm{tr}(AR^{\top}R))f(R)\prod_{j=1}^{k}R_{jj}^{n-j} 
    \prod_{i=1}^{k}
\frac{1}{t_i}\chi{\left(\lVert r_i \rVert \in [1, 1 + t_i]\right)}.
\end{align*}

We then exchange the order of limit and integration. Applying \Cref{prop: sphere dirac delta} again, one has
\begin{align*}
&\int_{X \in \R^{n \times k}}dX\, p(X) f(R(X))\\
=&\int_{R \in G_{U}^{+}}2^{k}C_{n, k}\exp(\beta n \mathrm{tr}(AR^{\top}R))f(R) \prod_{j=1}^{k}R_{jj}^{n-j}
\left(\lim_{t_1, \ldots, t_{k} \to 0}\prod_{i=1}^{k}\frac{1}{t_i}\chi{\left(\lVert r_i \rVert \in [1, 1 + t_i]\right)}\right)\\
=&\int_{R \in G_{U}^{+}}2^{k}C_{n, k}\exp(\beta n \mathrm{tr}(AR^{\top}R))f(R)\prod_{j=1}^{k}R_{jj}^{n-j} \prod_{i=1}^{k}\delta(1 - \lVert r_i \rVert ),
\end{align*}
where the last equality holds due to Fubini's theorem. Thus, we are done.
\end{proof}

\section{Discussion}\label{sec: conclusion}

We study a spherical \(n\)-vector model under a pairwise interaction potential. The model is a lifted spin model, and we study it under a finite temperature regime. We show that the model is exactly solvable in the \(n \to \infty\) limit. A sampling strategy for the \(n\)-vector model is discussed. A future research direction is to study the convergence of the normalized free energy \(Q_{n}(\beta)\) with a tighter non-asymptotic bound. An open question is whether the samples of the \(n\)-vector model for \(n > k\) are related to the Ising model at \(n = 1\). While a relationship at \(\beta \to \infty\) is exhibited by the work by Goemans and Williamson in \cite{goemans1995improved} through the SDP approximation ratio, it remains to be seen whether a similar result holds for a finite temperature setting. 

An interesting potential application of the \(n\)-vector model is to use the hyperspace projection of the Goemans-Williamson scheme to obtain approximate samples of the Ising model. Explicitly, for each sample \([x_1, \ldots, x_{k}] \sim p\) from the \(n\)-vector model \(p\), one can generate a random direction \(v \in \R^{n}\) and take \(s_i = \mathrm{sign}(\left<v, x_{i}\right>)\) to generate a sample \([s_1, \ldots, s_k]\) on the hypercube \(\{-1, 1\}^{k}\). The samples obtained in this fashion might serve as a good initialization when one performs MCMC on the corresponding Ising model with the same pairwise interaction matrix. Therefore, an open research question is whether such a proposal leads to faster mixing in practice.

\bibliographystyle{siamplain}
\bibliography{reference}
\end{document}